\documentclass[letterpaper]{article}

\usepackage{packages-styles/general-math}
\usepackage{packages-styles/field-probabilitytheory}
\usepackage{packages-styles/field-algorithmic}
\usepackage{packages-styles/general-scientific}
\addbibresource{references.bib}

\usepackage{packages-styles/general-text}

\usepackage{xcolor}
\usepackage{arxiv}

\hypersetup{
pdftitle={A Proof of Kirchhoff's First Law for Hyperbolic Conservation Laws on Networks},
pdfsubject={Network Science},
pdfauthor={},
pdfkeywords={Network Science, Analysis of PDEs, Dynamical Systems, Mathematical Modeling}
}

\newcommand{\NN}{\mathcal{N}}
\Crefname{property}{Property}{Properties}
\setlist[description]{font=\rmfamily}

\title{A Proof of Kirchhoff's First Law for Hyperbolic Conservation Laws on Networks}

\author{
 Alexandre M. Bayen \\
  Department of Electrical Engineering and Computer Sciences\\
  Department of Civil and Environmental Engineering\\
  University of California, Berkeley\\
  United States of America\\
  \href{bayen@berkeley.edu}{\texttt{bayen@berkeley.edu}}\\
   \And
 Alexander Keimer\\
  Department of Mathematics\\
  Friedrich-Alexander-University Erlangen-N{\"u}rnberg (FAU)\\
  Germany\\
  \href{mailto:alexander.keimer@fau.de}{\texttt{alexander.keimer@fau.de}}\\
  \And
 Nils Müller\footnotemark\\
  Max Planck Institute for Software Systems\\
  Saarland Informatics Campus\\
  Germany\\
  \href{mail@nilsmueller.io}{\texttt{mail@nilsmueller.io}}\\
}

\begin{document}
\maketitle

\begin{abstract}
    Networks are essential models in many applications such as information technology, chemistry, power systems, transportation, neuroscience, and social sciences. In light of such broad applicability, a general theory of dynamical systems on networks may capture shared concepts, and provide a setting for deriving abstract properties.\\
    To this end, we develop a calculus for networks modeled as abstract metric spaces and derive an analog of Kirchhoff's first law for hyperbolic conservation laws.
    In dynamical systems on networks, Kirchhoff's first law connects the study of abstract global objects, and that of a computationally-beneficial edgewise-Euclidean perspective by stating its equivalence. In particular, our results show that hyperbolic conservation laws on networks can be stated without explicit Kirchhoff-type boundary conditions.
\end{abstract}

\keywords{Network Science \and Analysis of PDEs \and Dynamical Systems \and Mathematical Modeling}

\section{Introduction}

In the dynamical systems community, networks are largely studied as families of Euclidean edges $\mathcal{E} := ([0, w_e])_{e \in E}$, with $w_e \in \R_{>0}$ and $E$ being an index set. The network's structure is often only incorporated by defining a family of coupled dynamical systems $\Phi_e: [0, T] \times \mathcal{E} \to \mathcal{S}, e \in E$, where $\mathcal{S}$ models some state-space and $T \in \R_{> 0}$. Networks typically do not appear explicitly.\\
In this work, however, we construct networks as metric spaces $(\mathcal{N}, d)$, where $d$ is a notion of path distance. We will show that sufficiently \emph{regular} networks $(\mathcal{N}, d)$ are Polish metric spaces---a convenient setting for deriving basic measure-theoretic notions underpinning abstract analysis. Although Euclidean calculus does not immediately apply to such networks, we develop, with minimal assumptions, a metric calculus for networks that turns out to have many of the properties of the known Euclidean setting.\\
Using such network calculus we introduce a weak notion of hyperbolic conservation laws, for which our main theorem, \cref{thm:kirchhoff}, states equivalence with $(\Phi_e)_{e \in E}$ if all $\Phi_e$ are a solution to a hyperbolic conservation law on an Euclidean edge with coupled boundary conditions. This holds, in particular, if all $\Phi_e$ admit a quasi-linear form (see \cref{rem:kirchhoff}). \cref{thm:kirchhoff} therefore implies that an arguably universal calculus for networks exists for which hyperbolic conservation laws can be stated without explicit Kirchhoff-type boundary conditions.\\
The results of our work thereby contribute to unifying the study of global objects relevant to dynamical systems, such as time-limiting phenomena or approximations, with the study of physically-plausible transport models in a mostly Euclidean setting.

\paragraph{Motivation and Related Work.}

Rather recently, the study of networks was extended by a global---as opposed to edgewise-Euclidean---perspective based on modeling networks as abstract metric spaces \cite{kuchment2003,Mugnolo_2014,duef2022}. While such network spaces, often also called quantum graphs, have been studied widely before \cite{pauling1936,platt1949,richardson1972,kottos1999,mugnolo2007,berkolaiko2022}, an abstract global model seeks to describe associated phenomena and apply existing mathematical language that benefits from it.\\ Physically-plausible transport on networks, modeled by \emph{coupled Euclidean hyperbolic conservation laws}, has been of great interest to the mathematical, science, and engineering communities \cite{holden1995,coclite2005,gugat2005,bretti2008,bressan2014,guarguaglini2015,gugat2015,garavello2016,bayen2019,bayen2022}. We seek to derive such \emph{coupling} in hyperbolic conservation laws as a consequence of the network's geometry and an associated abstract calculus characterized by universal properties. With such derivation, Kirchhoff's first law for hyperbolic conservation laws is provable and replaces a set of model axioms.

\begin{figure}
\vspace{-4mm}
\centering
    \begin{minipage}{.5\textwidth}
      \centering
      \includegraphics[width=1.\linewidth]{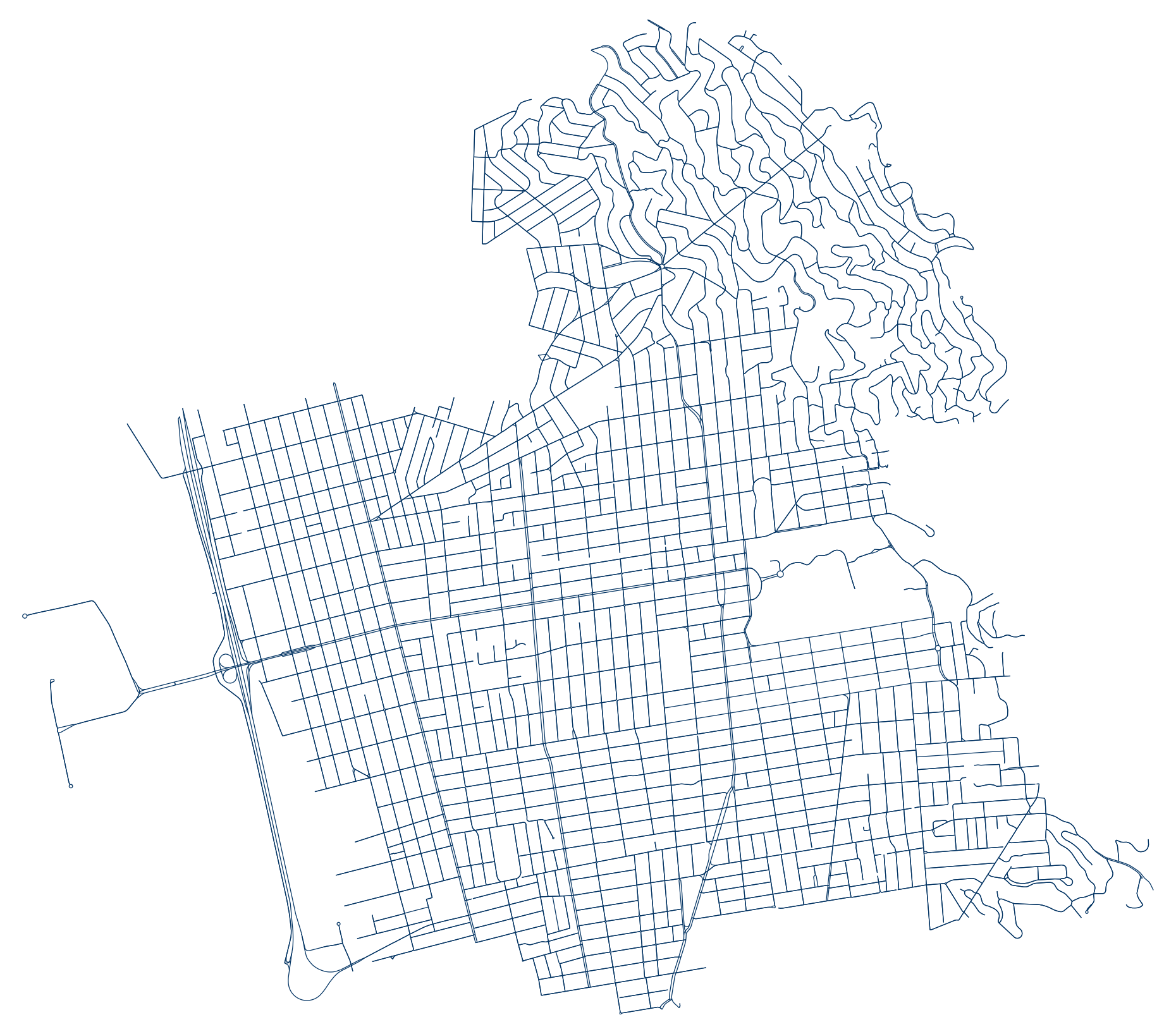}
    \end{minipage}%
    \begin{minipage}{.5\textwidth}
      \centering
      \includegraphics[width=1.\linewidth]{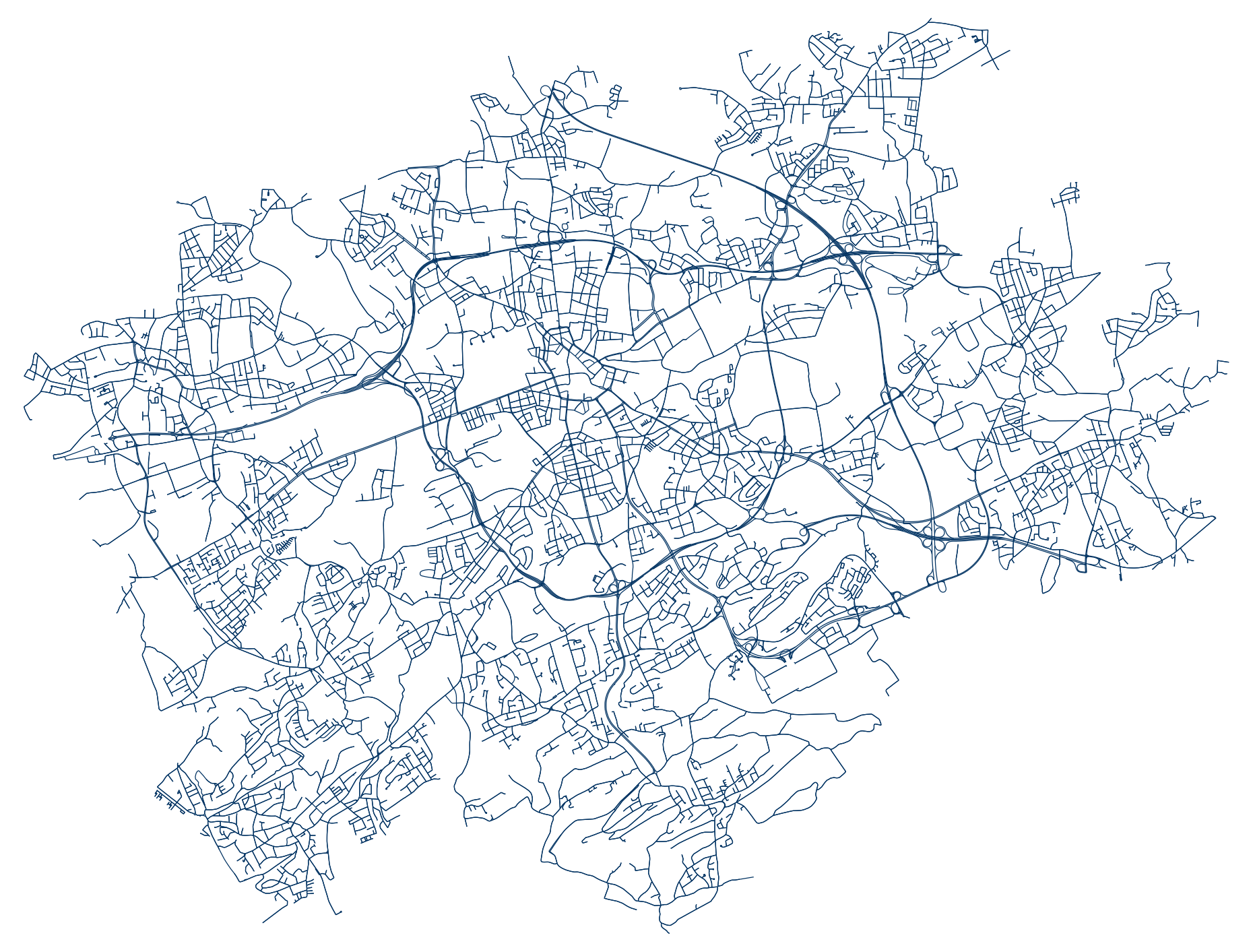}
    \end{minipage}
    \caption{The networks that represent the drivable roads of Berkeley, CA, USA (left) and Bochum, Germany (right) projected onto $\R^2$. Created from OpenStreetMap data \cite{OpenStreetMap} using the tool OSMnx \cite{boeing2017}.}
\label{fig:bbdriveembed}
\end{figure}

\paragraph{Outline.} In \cref{sec:networks}, we introduce a particularly accessible setting for constructing networks based on graph data and equivalence relations, discuss the (un)suitability of graphs for the purpose of network modeling, rule out pathological examples of networks to develop a notion of regularity, and motivate the notion of path distance.\\
We continue constructing a Lebesgue-like measure for networks, developing differentiability, derivatives, and integration by parts in \cref{sec:calculus}.\\
We close with motivating a notion of a weak hyperbolic conservation law for networks in \cref{sec:conservation-laws}, which lets us derive our main theorem, \cref{thm:kirchhoff}, an analog of Kirchhoff's law for these abstract dynamical systems.

\section{Networks}
\label{sec:networks}

The initial goal of this work is to introduce a rigorous notion of a \emph{network}. Networks ought to be the central objects on which the rest of the theory will build. A guiding perspective is taken in classical mechanics, built into many of the mathematical terms used in this section. The decisive impact of this perspective will be demonstrated first.\\

Due to the widespread use of \emph{graphs} in discrete mathematical modeling of related real-world phenomena, it is of interest to discuss their relevance to the purpose of network modeling. Classically, only when modeling pairwise relations of elements of a set $V$ called \emph{vertices}, one resorts to graphs $(V, E)$, with $E: K \to V \times V$, where $E(k)$ is called an $\emph{edge}$, $K$ is an index set, and $k \in K$. This representation includes multiple edges with the same vertices. Possibly one assigns a weight $W: K \to \R_{>0}$ to each edge to describe additional structure.

\begin{definition}[Positively weighted graph]
    A \textbf{positively weighted graph} is a tuple $(V, E, W)$, where $V$ is a set, $E: K \to V \times V$ is a function from an index set $K$ to the product of $V$ with itself, and $W: K \to \R_{>0}$ is a function from $K$ to the positive reals. It is required that $E(K)_1 \cup E(K)_2 = V$, that is, all vertices are connected to an edge.
\end{definition}

As a network model and for the purpose of modeling in this work, however, graphs will not be sufficient. In mathematical terms, one can find an indication of the unsuitability of graphs in \cref{lem:nographs}.

\begin{lemma}
    Every metric, path-connected, finite vertex space has only one vertex.
\label{lem:nographs}
\end{lemma}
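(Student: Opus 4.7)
The plan is to exploit the fact that any finite metric space is discrete as a topological space, and then use the standard observation that continuous images of $[0,1]$ into a discrete space are constant.

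First I would let $(V,d)$ be a finite metric space with vertex set $V = \{v_1,\ldots,v_n\}$ and assume, toward a contradiction, that $n \geq 2$. For each $v_i \in V$, define
\[
  r_i \;:=\; \tfrac{1}{2}\min_{j \neq i} d(v_i, v_j),
\]
which is strictly positive because $V$ is finite and $d$ separates points. Then the open ball $B(v_i, r_i)$ equals $\{v_i\}$, so every singleton is open and $(V,d)$ carries the discrete topology.

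Next I would invoke path-connectedness: for any two distinct $v_i, v_j$, there must exist a continuous map $\gamma:[0,1] \to V$ with $\gamma(0)=v_i$ and $\gamma(1)=v_j$. Since $[0,1]$ is connected and continuous images of connected sets are connected, $\gamma([0,1])$ is a connected subset of the discrete space $V$. But the only connected subsets of a discrete space are singletons, so $\gamma([0,1])$ reduces to a single point, contradicting $\gamma(0) \neq \gamma(1)$. Hence $n = 1$.

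The argument is essentially routine, and I do not foresee a real obstacle: the only subtle point is verifying that a finite metric space is discrete, which follows immediately from the positivity of $r_i$. The conceptual content of the lemma is then precisely the message the authors wish to convey, namely, that bare graphs (treated as finite metric vertex sets) cannot simultaneously support a metric structure and nontrivial path-connectedness, motivating the richer notion of network developed in the sequel.
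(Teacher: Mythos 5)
Your proof is correct and follows essentially the same route as the paper's: both establish that a finite metric space is discrete via balls of radius below the minimum pairwise distance, and then derive a contradiction from connectedness (the paper argues directly that a connected discrete space has no proper nonempty clopen subsets, while you localize the same observation to the image of a path, which is an immaterial difference).
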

\begin{proof}
    A finite metric space $(V, d)$ has the discrete topology as for all $x \in V$ the ball with radius smaller than
    \[
        \min_{y \in V\setminus\{x\}} d(x,y)
    \]
    contains only $x$. If $V$ carries the discrete topology all subsets of $V$ are \emph{clopen}. If $V$ is additionally path-connected, it is also connected, and thus the only clopen sets are $\varnothing$ and $V$. This implies $V = \{x\}$.
\end{proof}

\cref{lem:nographs} characterizes the incompatibility of graphs and the \emph{continuity} of real numbers embedded into much of the mathematical language. This is relevant to practical problems, as continuity is the defining structure in \emph{continuum mechanics} and is thus an inherent property in modeling network phenomena. In applications, the following, slightly informally stated properties are expected to be fulfilled.
\begin{align}
    &\text{\emph{A network ought to be path-connected, admit a compatible notion of distance,}}\notag\\
    &\text{\emph{be complete, and have reasonable integration.}}
    \label[property]{prop:structural}
\end{align}

Nonetheless, it turns out that one can construct networks that fulfill \cref{prop:structural} \textbf{from} graphs but \textbf{not as} graphs---as will be shown by \cref{def:network} and \cref{thm:regularity}. That is, positively weighted graphs non-uniquely encode the information of a specific network among all networks. Yet additional information is supplied by intervals of real numbers to obtain a general network setting with \cref{prop:structural}.

\subsection{Construction of Networks}

A quite elementary way to construct a network with such properties is as a set of equivalence classes of elements of intervals in $\R$ with a notion of path-distance.
While there are multiple ways to construct such spaces mathematically, even such that do not rely on graphs, in \cref{def:network} networks will be constructed as a set and equipped with a distance function.
Some pathological examples of networks will then be ruled out in \cref{def:regularnetwork}, and \cref{prop:structural} proven in \cref{thm:regularity}.
This presentation may be relatively accessible for interdisciplinary study, as it requires only a small amount of abstract language. Two examples of street networks are visualized in \cref{fig:bbdriveembed}.\\

A topological construction of networks, on the other hand, is given in \cite{mugnolo2019}. Although the topological setting is arguably elegant, the presented quotient topology in general agrees with the path distance topology only for (in the work termed \enquote{combinatorically}) locally finite networks (see \enquote{middle} of \cref{fig:malignnetworks}). As a benefit of such a setting, it becomes clear that networks are topologically characterized by the universal property of the quotient topology. In this work, it will be shown that the assumption of a countable number of edges can be recovered from the regularity assumptions of \cref{def:regularnetwork}.\\

To define networks in a concise way, the following tools are introduced. Given a family $(I(w_j))_{j \in J}$ of intervals $I(w_j) := [0, w_j] \subseteq \R$ with $w_j>0$ for all $j \in J$ and $J$ being a non-empty index set.
The disjoint union of these intervals is defined as
\[
     \mathrm{Un}\big((I(w_j))_{j \in J}\big) := \{ (x, j) \mid x \in I(w_j)\,,\,\, j \in J \} \,,
\]
and an extended metric on $\mathrm{Un}\big((I(w_j))_{j \in J}\big)$ as
\[
\begin{array}{c}
    d': \mathrm{Un}\big((I(w_j))_{j \in J}\big) \times \mathrm{Un}\big((I(w_j))_{j \in J}\big) \longrightarrow [0, \infty]
    \,,
    \\
    \big((x, j), (x', j')\big) \longmapsto  \begin{cases} \abs{x-x'}, & \text{if $j=j'$}  \\ \infty, & \text{otherwise.}\end{cases}
\end{array}
\]

\begin{definition}[Network]\leavevmode
\label{def:network}
    Given a positively weighted graph $(V, E, W)$, one can define an equivalence relation $\sim$ on the disjoint union $\mathrm{Un}\big((I(W(k)))_{k \in K}\big)$ by
    \[
        (x, k) \sim (x', k') \longeq 
        \begin{cases}
            x=x' \,\, \text{and} \,\, E(k)=E(k') & \text{or}
            \\
            x=x'=0 \,\, \text{and} \,\, E(k)_1=E(k')_1 & \text{or}
            \\
            x=0 \,\, \text{and} \,\, x'=W(k') \,\, \text{and} \,\, E(k)_1=E(k')_2 & \text{or}
            \\
            x=W(k) \,\, \text{and} \,\, x'=0 \,\, \text{and} \,\, E(k)_2=E(k')_1 & \text{or}
            \\
            x=W(k) \,\, \text{and} \,\, x'=W(k') \,\, \text{and} \,\, E(k)_2=E(k')_2 \,.
        \end{cases}
    \]
    A \textbf{network} associated with $(V, E, W)$ is defined as the set $\mathcal{N}$ of equivalence classes
    \[
        \mathcal{N} := \left\{ [x] \mid x \in \mathrm{Un}\big((I(W(k)))_{k \in K}\big) \right\} \,,
    \]
    where
    \[
        [x] := \{ y \in \mathrm{Un}\big((I(W(k)))_{k \in K}\big) \mid  \,\, y \sim x \} \,.
    \]
    A network is endowed with a function
    \[
    \begin{array}{c}
        d: \mathcal{N} \times \mathcal{N} \longrightarrow [0, \infty]
        \,,
        \\
        (x, x') \longmapsto \inf\left\{ \displaystyle\sum_{i=1}^n d'(p_i, q_i) \,\,\bigg\vert\,\, \begin{array}{c}
            n \in \N \,;\,\, (p_i)_{i \in \N_{\leq n}}, (q_i)_{i \in \N_{\leq n}} \in \mathrm{Un}\big((I(W(k)))_{k \in K}\big)
            \\
            x \sim p_1\,;\,\, x' \sim q_n \,;\,\, \forall i \in \N_{[1, n-1]}: 
            q_i \sim p_{i+1}
        \end{array} \right\} \,.
    \end{array}
    \]
\end{definition}

\begin{remark}
    In $\mathcal{N}$, the equivalence classes of the boundaries of intervals are called \textbf{vertices} of the network and denoted with $\mathcal{V}$. The set $\mathcal{E}$ is called \textbf{edges} of the network $\NN$, where an element is a set that contains the equivalence classes of elements with the same index. The vertices of an edge $e \in \mathcal{E}$ are denoted by $0_e$ and $w_e$. Note that the distance function $d$ does not regard the orientation of the edges. The edges that contain the vertex $v$ will be denoted with $\mathcal{E}_v$.
\end{remark}

\begin{figure}[ht]
    \color{myBlue}
    \centering
    \makebox[\textwidth][c]{
    \begin{tikzpicture}[scale=2.5]
            \draw[very thick]  (-1.866,0) -- node[above=3mm] {$E_W(1)$} (-.866,0);
            \draw[very thick]  (-.866,0) -- node[above=3mm] {$E_W(2)$} (0,.5);
            \draw[very thick]  (-.866,0) -- node[below=3mm] {$E_W(3)$} (0,-.5);
            \draw[very thick]  (0,.5) -- node[above=3mm] {$E_W(5)$} (.866,0);
            \draw[very thick]  (0,-.5) -- node[below=3mm] {$E_W(6)$} (.866,0);
            \draw[very thick]  (.866,0) -- node[above=3mm] {$E_W(7)$} (1.866,0);
            \draw[very thick]  (0,.5) -- node[right=1mm] {$E_W(4)$} (0,-.5);
            
            \node[label={[label distance=-11mm]:{\Large $1$}}] at (-1.866,0) [circle,fill=myBlue,scale=.65] {};
            \node[label={[label distance=-11mm]:{\Large $2$}}] at (-.866,0)
            [circle,fill=myBlue,scale=.65] {};
            \node[label={[label distance=-11mm]:{\Large $6$}}] at (1.866,0) [circle,fill=myBlue,scale=.65] {};
            \node[label={[label distance=-11mm]:{\Large $5$}}] at (.866,0) [circle,fill=myBlue,scale=.65] {};
            \node[label={[label distance=3mm]:{\Large $3$}}] at (0,.5) [circle,fill=myBlue,scale=.65] {};
            \node[label={[label distance=-11mm]:{\Large $4$}}] at (0,-.5) [circle,fill=myBlue,scale=.65] {};
    \end{tikzpicture}
    }
    \caption{An embedding of the Wheatstone network into $\R^2$ that locally preserves vertex distances.}
    \label{fig:wheatstone}
\end{figure}
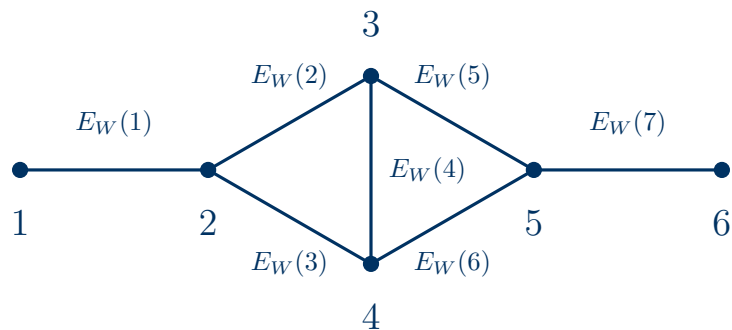

Essentially, intervals are \enquote{glued} together, and a universal extension from distances along intervals to distances along simple paths is constructed. Although at this point, paths have not been defined rigorously. In fact, paths will be recovered from the distance function $d$\,---which will be shown to be a metric---in the case of regular networks.\\

A classic example of a network is the Wheatstone network, which originated from electrodynamics \cite{wheatstone1843} and is seminal in the study of transportation systems \cite{braess1968}.

\begin{example}[Wheatstone network]
    As per \cref{def:network}, one can characterize a network by a positively weighted graph $(V_{wh}, E_{wh}, W_{wh})$, where $V_{wh}:=\{1, \dots, 6\}$, $K_{wh} := \{1, \dots, 7\}$, and $E_{wh}: K_{wh} \to V_{wh} \times V_{wh}$, and
    \begin{gather*}
        E_{wh}(1) := (1, 2) \,,
        \quad
        E_{wh}(2) := (2, 3) \,,
        \quad
        E_{wh}(3) := (2, 4) \,,
        \quad
        E_{wh}(4) := (3, 4) \,,
        \\
        E_{wh}(5) := (3, 5) \,,
        \quad
        E_{wh}(6) := (4, 5) \,\text{, and}
        \quad
        E_{wh}(7) := (5, 6) \,.
    \end{gather*}
    The weight function can be picked rather arbitrarily, but an embedding into $\R^2$ preserves vertex distances, e.g., for $W_{wh} \equiv 1$. The network $\NN_{wh}$ characterized by the graph $(V_{wh}, E_{wh}, W_{wh})$ is called \textbf{Wheatstone network}. See \cref{fig:wheatstone} for a visualization of the Wheatstone network.
\label{ex:wheatstonenetwork}
\end{example}

\subsection{Properties of Networks}

There are certain networks, depicted in \cref{fig:malignnetworks}, that do not carry the structure required in \cref{prop:structural}. These networks turn out to be somewhat pathological and do not lend themselves well to the purpose of modeling real-world networks. However, the following definition describes networks that fulfill all of the requirements of \cref{prop:structural}.

\begin{definition}[Regularity]
    A network $\NN$ is called \textbf{regular} if
    \begin{itemize}
        \item it is locally finite, i.e., each vertex is included in at most a finite number of edges,
        
        \item there exists a positive lower bound of the edge lengths, and
        \item it is connected with respect to the topology induced by $d$.
    \end{itemize}
\label{def:regularnetwork}
\end{definition}

Regular networks allow the description of Lebesgue integration, continuous functions, and paths. These constructs are essential to the theory and practice of modeling transport phenomena on networks. The following theorem, therefore, describes several mathematical qualities of networks that enable their study.

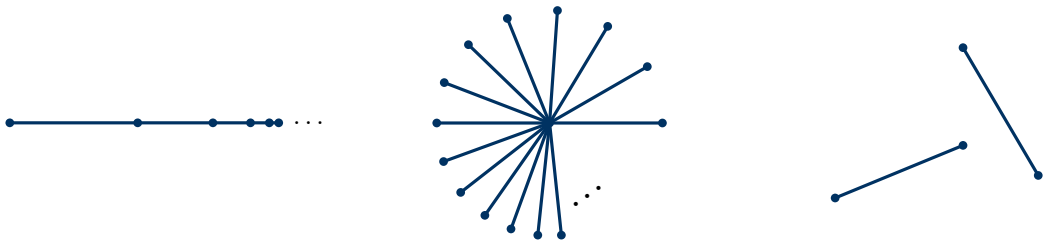
\begin{figure}[ht]
\centering
    \begin{minipage}{.33\textwidth}
    \centering
    \begin{tikzpicture}
        \draw[very thick,color=myBlue] (0,0) -- (1.7,0) ;
        \draw[very thick,color=myBlue] (1.7,0) -- (2.7,0);
        \draw[very thick,color=myBlue] (2.7,0) -- (3.2,0);
        \draw[very thick,color=myBlue] (3.2,0) -- (3.45,0);
        \draw[very thick,color=myBlue] (3.45,0) -- (3.575,0);
        \node at (0,0) [circle,fill=myBlue,scale=.35] {};
        \node at (1.7,0) [circle,fill=myBlue,scale=.35] {};
        \node at (2.7,0) [circle,fill=myBlue,scale=.35] {};
        \node at (3.2,0) [circle,fill=myBlue,scale=.35] {};
        \node at (3.45,0) [circle,fill=myBlue,scale=.35] {};
        \node at (3.575,0) [circle,fill=myBlue,scale=.35] {};
        \node at (4.0,0) {$\dots$};
    \end{tikzpicture}
    \end{minipage}%
    \begin{minipage}{.33\textwidth}
    \centering
    \begin{tikzpicture}
        \draw[very thick,color=myBlue] (0,0) -- (0:1.5cm);
        \draw[very thick,color=myBlue] (0,0) -- (30:1.5cm);
        \draw[very thick,color=myBlue] (0,0) -- (59:1.5cm);
        \draw[very thick,color=myBlue] (0,0) -- (86:1.5cm);
        \draw[very thick,color=myBlue] (0,0) -- (112:1.5cm);
        \draw[very thick,color=myBlue] (0,0) -- (136:1.5cm);
        \draw[very thick,color=myBlue] (0,0) -- (159:1.5cm);
        \draw[very thick,color=myBlue] (0,0) -- (180:1.5cm);
        \draw[very thick,color=myBlue] (0,0) -- (200:1.5cm);
        \draw[very thick,color=myBlue] (0,0) -- (218:1.5cm);
        \draw[very thick,color=myBlue] (0,0) -- (235:1.5cm);
        \draw[very thick,color=myBlue] (0,0) -- (250:1.5cm);
        \draw[very thick,color=myBlue] (0,0) -- (264:1.5cm);
        \draw[very thick,color=myBlue] (0,0) -- (276:1.5cm);
        \node at (0,0) [circle,fill=myBlue,scale=.35] {};
        \node at (0:1.5cm) [circle,fill=myBlue,scale=.35] {};
        \node at (30:1.5cm) [circle,fill=myBlue,scale=.35] {};
        \node at (59:1.5cm) [circle,fill=myBlue,scale=.35] {};
        \node at (86:1.5cm) [circle,fill=myBlue,scale=.35] {};
        \node at (112:1.5cm) [circle,fill=myBlue,scale=.35] {};
        \node at (136:1.5cm) [circle,fill=myBlue,scale=.35] {};
        \node at (159:1.5cm) [circle,fill=myBlue,scale=.35] {};
        \node at (180:1.5cm) [circle,fill=myBlue,scale=.35] {};
        \node at (200:1.5cm) [circle,fill=myBlue,scale=.35] {};
        \node at (218:1.5cm) [circle,fill=myBlue,scale=.35] {};
        \node at (235:1.5cm) [circle,fill=myBlue,scale=.35] {};
        \node at (250:1.5cm) [circle,fill=myBlue,scale=.35] {};
        \node at (264:1.5cm) [circle,fill=myBlue,scale=.35] {};
        \node at (276:1.5cm) [circle,fill=myBlue,scale=.35] {};
        \draw (300:1.cm) node {\textbf{\reflectbox{$\ddots$}}};
    \end{tikzpicture}
    \end{minipage}
    \begin{minipage}{.33\textwidth}
    \centering
    \begin{tikzpicture}
        \draw[very thick,color=myBlue] (0,0) -- (1.7,.7) ;
        \draw[very thick,color=myBlue] (1.7,2) -- (2.7,.3);
        \node at (0,0) [circle,fill=myBlue,scale=.35] {};
        \node at (1.7,.7) [circle,fill=myBlue,scale=.35] {};
        \node at (1.7,2) [circle,fill=myBlue,scale=.35] {};
        \node at (2.7,.3) [circle,fill=myBlue,scale=.35] {};
    \end{tikzpicture}
    \end{minipage}
    \caption{Examples motivating \cref{def:regularnetwork}: Even a locally finite, connected network \textbf{may not be complete} (left). Even if edge lengths have a lower bound, a network \textbf{may not be locally compact} (middle). A network which is \textbf{not connected} (right).}
    \label{fig:malignnetworks}
\end{figure}

\begin{theorem}[Regularity]
    Regular networks $(\mathcal{N}, d)$ are path-connected, complete, locally compact, separable metric spaces with a countable number of edges.
\label{thm:regularity}
\end{theorem}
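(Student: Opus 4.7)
The plan is to verify each claim in an order that reuses earlier results. First I would check that $d$ is a bona fide metric taking values in $[0, \infty)$. Symmetry and the triangle inequality fall out immediately by reversing and by concatenating the chains in the infimum defining $d$. For finiteness, I would observe that \enquote{$d(x,y) < \infty$} is an equivalence relation on $\mathcal{N}$ whose classes are each clopen in the $d$-topology: the triangle inequality shows every finite $d$-ball lies in a single class, so each class is open, and each complement is a union of other classes. Topological connectedness then forces a single class, giving $d < \infty$ throughout. For positivity on distinct points, I would read any chain of length $L$ as a concrete piecewise $1$-Lipschitz map into $\mathcal{N}$ obtained by linearly interpolating in each interval $I(W(k_i))$ and concatenating at the points that $\sim$ identifies; the lower bound $\delta > 0$ on edge lengths enters essentially here, since without it one could have two vertices joined by infinitely many edges of lengths tending to zero and hence $d=0$ on distinct points. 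This same chain-to-path construction also yields path-connectedness: once $d < \infty$ is known, any finite chain is an actual continuous path in $\mathcal{N}$.

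For countability of $\mathcal{E}$, I would pick a vertex $v_0$ and define inductively $\mathcal{V}_0 := \{v_0\}$ and $\mathcal{V}_{k+1} := \mathcal{V}_k \cup \bigcup_{v \in \mathcal{V}_k} \bigcup_{e \in \mathcal{E}_v} \{0_e, w_e\}$. Local finiteness makes each $\mathcal{V}_k$ finite by induction. The union of $\bigcup_k \mathcal{V}_k$ with all their incident edges is clopen in $\mathcal{N}$: it is open because around any of its points a sufficiently small ball is contained in the finite union of locally incident edges, and its complement is of the same form on the remaining graph-components. Topological connectedness therefore forces $\mathcal{V} = \bigcup_k \mathcal{V}_k$, so $\mathcal{V}$, and then by local finiteness also $\mathcal{E}$, is countable. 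Separability follows by taking on each edge the image of $\mathbb{Q} \cap [0, w_e]$ under the quotient map and forming the countable union. For local compactness, interior points of an edge are immediate; at a vertex $v$ I would choose a radius smaller than the minimum of the lengths of the finitely many edges in $\mathcal{E}_v$, so that the closed ball is a closed subset of a finite union of compact initial segments.

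For completeness I would prove the stronger statement that every closed ball $\overline{B}(x_0, R)$ is compact. An edge $e$ intersects $\overline{B}(x_0, R)$ only if either $x_0 \in e$ (finitely many such $e$) or any chain realising the distance from $x_0$ to a point of $e$ must enter $e$ through a vertex of $e$, which then lies within distance $R$ of $x_0$. Reusing the chain analysis together with the $\delta$-bound, every transition between two distinct vertices in a chain contributes at least $\delta$ to its length, so any vertex at distance $\leq R$ from $x_0$ is reachable by a graph walk of at most $\lceil R/\delta \rceil + 1$ edges starting from a vertex of an edge containing $x_0$; iterating local finiteness then gives a finite set of such vertices. Hence $\overline{B}(x_0, R)$ is a closed subset of a finite union of compact intervals, and therefore compact; every Cauchy sequence, being bounded, consequently converges.

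The step I expect to be the main obstacle is the bookkeeping inside chains that underlies both positivity and the ball-count for completeness: separating full from partial interval traversals, handling chains that re-enter the same edge or revisit vertices without \emph{net} graph-progress, and from this extracting both a clean lower bound on progress toward any new vertex and the concrete concatenated continuous path used for path-connectedness. Once this combinatorial analysis is carried out carefully, everything else reduces to routine applications of local finiteness and the standard properties of Euclidean intervals.
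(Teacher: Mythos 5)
Your proposal is correct and establishes every claim of the theorem, but it takes a genuinely different route from the paper on several sub-steps. For finiteness of $d$ you observe that the relation $d(\cdot,\cdot)<\infty$ partitions $\mathcal{N}$ into clopen classes and invoke connectedness directly; the paper instead first proves local path-connectedness (small balls are unions of subintervals), deduces path-connectedness from connectedness, and only then bounds $d(x,y)$ by the diameter of a compact path image. Your clopen argument is more self-contained, while the paper's yields path-connectedness as a byproduct; you recover it separately by converting a finite chain of finite length into an explicit concatenated continuous path, which is the more constructive of the two. For countability of $\mathcal{E}$ you induct over graph distance and again close with a clopen/connectedness argument, whereas the paper inducts over the metric balls $A_{\varepsilon n}$ of vertices within distance $\varepsilon n$; these are essentially equivalent because the lower bound on edge lengths ties graph distance to metric distance. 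For completeness you prove the stronger statement that closed balls are compact (properness), which delivers completeness and local compactness in one stroke; the paper argues both separately and much more tersely, so your version is arguably a strengthening. Two minor remarks: positivity of $d$ on distinct points really rests on local finiteness rather than on the length bound (your counterexample with infinitely many short parallel edges also violates local finiteness, so it does not isolate the role of $\delta$); and the chain bookkeeping you flag as the main obstacle reduces to the single observation that any transition between two distinct vertices requires a full edge traversal and hence costs at least $\delta$ --- this is exactly the estimate the paper also relies on implicitly, so your sketch is adequate and no essential idea is missing.
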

\begin{remark}
    Regularity therefore implies that networks are Polish spaces. \textbf{All networks considered will be regular for the remainder of this work}.
\end{remark}
\begin{proof}\leavevmode
    \begin{enumerate}
        \item It is shown that $d$ is an extended metric (may take the value $\infty$) on a regular network.
        \setlist[description]{font=\normalfont\itshape}
        \begin{description}
            \item[Identitity of indiscernibles:] Let $x,y \in \NN$. If $x \neq y$ their distance $d(x,y)$ is not smaller than the length of a subinterval of some edge. To see why that is, consider the following cases.
            \begin{description}
            \item[$x$ is a vertex:] Either $y$ must be an inner point of one of the finite number of edges connected to $x$. In this case (in slight abuse of notation) $d(x,y) \geq \min \{\abs{x-y}, \varepsilon\}$, where $\varepsilon$ is a positive lower bound to the edge lengths. Otherwise, $y$ is contained in an edge not connected to $x$. In this case $d(x,y) \geq \varepsilon$.
            \item[$x$ is not a vertex:] $d(x,y)$ is larger or equal to the shortest distance of $x$ to the closest vertex (see previous case) or (in slight abuse of notation) $d(x,y) = \abs{x-y}$ if $x$ and $y$ are on the same edge.
            \end{description}
            If $x=y$, one has $d(x,y) = 0$ as for $p_1 \sim x$, $q_1 \sim y$ there is the extended metric distance $d'(p_1, q_1) = 0$.
            
            \item[Symmetry:] By the reversal of finite sequences and commutativity of addition, $d$ is symmetric.
            
            \item[Triangle inequality:] Let $x,y,z \in \NN$ and for $\gamma > 0$ pick sequences $(p_i, q_i)_{i \in \mathbb{N}}, (p_i', q_i')_{i \in \mathbb{N}}$ (see \cref{def:network}) with
            \begin{gather*}
                \left(\sum_{i=1}^n d'(p_i, q_i)\right) - d(x,y) < \tfrac{\gamma}{2} \qquad \text{and} \qquad \left(\sum_{i=1}^n d'(p_i', q_i') \right) - d(y,z) < \tfrac{\gamma}{2}  \,,
                \\
                x \sim p_1, y \sim q_n, y \sim p_1', z \sim q_n' \,, \,\, \text{and}
                \\
                \forall i \in \N_{<n}: q_i \sim p_{i+1}, q_i' \sim p_{i+1}' \,.
            \end{gather*}
            Such sequences exist by definition of $d$.
            By concatenation of the sequences, one has
            \[
                d(x, z) - d(x,y) - d(y, z) < \gamma \,.
            \]
            As this holds for all $\gamma > 0$, the triangle inequality follows.
        \end{description}
        
        \item It is shown that regular networks $(\NN, d)$ are path-connected. By assumption, regular networks are connected. Regular networks are also locally path-connected by the argument following below. Generally, topological spaces that are connected and locally path-connected are path-connected. All of the above arguments are with respect to the topology induced by the extended metric $d$.\\
        
        Let $x \in \NN$, then one can pick $\varepsilon>0$ such that $U_\varepsilon(x)$ contains only elements of $\NN$ that share an edge with $x$. This is done such that $U_\varepsilon(x)$ is only a subset of one edge if $x$ is not a vertex. Otherwise, $\varepsilon$ should be picked as a lower bound of the edge lengths. $U_\varepsilon(x)$ is path-connected as edges are path-connected, and its restriction onto each edge is a subinterval.
        
        \item By the following argument, one can show using the path-connectedness of $\NN$ that the extended metric $d$ takes only finite values, i.e., that $d$ is a metric.\\
        
        For $x,y \in \NN$ pick a path $\gamma: [0, 1] \to \NN$ with $\gamma(0) = x$ and $\gamma(1) = y$. As $[0, 1]$ is compact, $\gamma([0, 1])$ is also compact and in particular bounded. Therefore, $d(x,y)$ is bounded by the same bound.
        
        \item A regular network $(\NN, d)$ has a countable number of edges.\\
        
        Let $\varepsilon>0$ be positive lower bound of the edge lengths and $x \in \mathcal{V}$. The initial goal is to show that for all $n \in \N_0$
        \[
            A_{\varepsilon n} := \{ y \in \mathcal{V} \mid d(x,y) \leq \varepsilon n\}
        \]
        is finite. This can be proved by an inductive argument. Clearly, $\abs{A_\varepsilon} = 0$ and thus assume that $A_{\varepsilon n}$ is finite for some $n \in \N_0$. The vertices in $A_{\varepsilon(n+1)} \setminus A_{\varepsilon n}$ are connected to $A_{\varepsilon n}$ by single edges, as per the lower bound of edge lengths $\varepsilon$. But as $A_{\varepsilon n}$ is finite and all $v \in A_{\varepsilon n}$ are only connected to a finite number of edges, one has
        \[
            \abs{A_{\varepsilon (n+1)}} \leq \sum_{v \in A_{\varepsilon n}} \abs{\mathcal{E}_v} < \infty \,.
        \]
        This implies that by countable union of finite sets, the set of vertices $\bigcup_{n \in \N_0} A_{\varepsilon n}$ is finite, and the set of edges is countable.
        
        \item A regular network $(\NN, d)$ is a complete space, as for all Cauchy sequences $(x_n)_{n \in \N} \in \NN$ one can pick $N \in \N$ such that $(x_{n+N})_{n \in \N}$ is contained in a finite number of edges, which by restriction, is therefore included in a compact and thus complete subset.
        
        \item A regular network $(\NN, d)$ is locally compact, as it is complete and neighborhoods contain bounded closed neighborhoods.
        
        \item Generally countable unions of separable spaces are separable. Thus, regular networks are separable as they are countable unions of edges, which are separable.
    \end{enumerate}
\end{proof}

Lastly, for the following work, a precise notion of a path and its length is useful as it relates $d$ and shortest paths.

\begin{definition}[Path]
    A \textbf{path} in a regular network $\mathcal{N}$ is a continuous function $p: [0, l] \to \mathcal{N}$, where $l > 0$. The \textbf{length of a path} is defined as
    \[
        \ell(p) := \sup \left\{ \sum_{i=1}^{n-1} d(p(t_i), p(t_{i+1})) \,\,\bigg\vert\,\, 0=t_1 \leq \dots \leq t_n=l,\, n \in \N \right\} \,.
    \]
    A path $p: [0, l] \to \mathcal{N}$ is called \textbf{parameterized by its length} if $\ell(p_{|[0, l']}) = l'$ for all $l'\leq l$.
\end{definition}

\begin{remark}
    Given an edge $e_k \in \mathcal{E}$, in slight abuse of notation $\ell(e_k):=W(k)$ will also be written for any $k \in K$, where $K$ is the index set of the graph associated with the network. For a path $p:[0, l] \to \NN$, if $p(0) = x$ and $p(l)=y$, it is said that $x \rightsquigarrow y$ for $p$.
\end{remark}

\begin{theorem}
    In a regular network $\mathcal{N}$, for any $x,y \in \mathcal{N}$, there exists at least one shortest path $x \rightsquigarrow y$ parametrized by its length $d(x,y)$.
\end{theorem}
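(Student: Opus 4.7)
The plan is to prove the theorem by a Hopf--Rinow-type compactness argument tailored to the regular-network setting. The first step is to verify that $(\mathcal{N}, d)$ is a length space, in the sense that
\[
d(x,y) \;=\; \inf\{\, \ell(p) \mid p \text{ is a path with } x \rightsquigarrow y \text{ for } p \,\}.
\]
The inequality $d(x,y) \leq \ell(p)$ for any path $p$ from $x$ to $y$ follows from the triangle inequality applied to an arbitrary partition in the definition of $\ell(p)$. For the reverse direction, a near-optimal sequence $(p_i, q_i)_{i=1}^n$ from \cref{def:network} with $\sum_i d'(p_i, q_i) \leq d(x,y) + \gamma$ can be realized as a continuous path by concatenating the corresponding traversals of subintervals of edges in order; adjacent endpoints agree in $\mathcal{N}$ by construction of the equivalence relation, and the resulting path has length equal to $\sum_i d'(p_i, q_i)$.

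Next, set $L := d(x,y)$ and pick a minimizing sequence of paths $(p_n)_{n \in \N}$ from $x$ to $y$ with $\ell(p_n) \to L$. After arc-length reparametrization and, for large $n$, constant extension by the value $y$, each $p_n$ may be taken to be a $1$-Lipschitz map $[0, L+1] \to \mathcal{N}$ with $p_n(0) = x$ and image contained in the closed ball $K := \overline{B}_d(x, L+1)$. The critical observation is that $K$ is compact: by the same counting argument used in step~(4) of the proof of \cref{thm:regularity}, the positive lower bound $\varepsilon > 0$ on edge lengths together with local finiteness forces only finitely many edges to intersect $K$, whence $K$ is a closed subset of a finite union of compact edges and therefore itself compact.

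The Arzel\`a--Ascoli theorem then yields a subsequence of $(p_n)$ converging uniformly to a continuous limit $p \colon [0, L+1] \to \mathcal{N}$ with $p(0) = x$ and $p(L) = y$. Lower semicontinuity of $\ell$ under uniform convergence (immediate from its definition as a supremum of sums of the form $\sum_i d(p(t_i), p(t_{i+1}))$) gives $\ell(p_{|[0,L]}) \leq \liminf_n \ell(p_n) = L$, while $\ell(p_{|[0,L]}) \geq d(x,y) = L$ from the first step. Hence $p_{|[0,L]}$ is a shortest path; applying the same two-sided estimate to every subinterval $[0, l']$ yields $\ell(p_{|[0, l']}) = l'$, so $p_{|[0,L]}$ is parametrized by its length.

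The main obstacle is the compactness of $K$, where regularity is essential. Without a uniform lower bound on edge lengths or without local finiteness, the minimizing sequence could escape into ever-thinner regions of the network (compare the pathological examples of \cref{fig:malignnetworks}), and Arzel\`a--Ascoli would fail. Everything else reduces to standard arguments in metric geometry.
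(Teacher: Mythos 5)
Your proof is correct, but it takes a genuinely different route from the paper's. The paper argues combinatorially: since only finitely many edges meet the ball of radius $d(x,y)$ around $x$ (by the lower bound on edge lengths and local finiteness), there are only finitely many candidate edge-sequences for a shortest path, and one minimizes over this finite set. You instead run the direct method in the style of Hopf--Rinow: verify that $d$ is the infimum of path lengths, take a minimizing sequence of $1$-Lipschitz arc-length parametrizations trapped in a compact ball, extract a uniform limit via Arzel\`a--Ascoli, and conclude by lower semicontinuity of $\ell$. Both arguments hinge on the same consequence of regularity --- finitely many edges in any bounded region --- but yours is longer while making explicit the length-space property of $(\NN,d)$, which the paper uses only implicitly, and it generalizes verbatim to any complete, locally compact length space; the paper's is shorter and tailored to the discrete--continuous structure of networks. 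Two small imprecisions in your write-up, neither fatal: the concatenated path realizing a chain from \cref{def:network} has length \emph{at most} $\sum_i d'(p_i,q_i)$ (identifications in the quotient can only shorten distances), which is all the infimum argument needs; and in the final step the ``same two-sided estimate'' on $[0,l']$ only gives $\ell(p_{|[0,l']}) \leq l'$ and $\ell(p_{|[0,l']}) \geq d(x,p(l'))$, where the latter may be strictly less than $l'$ --- you should instead invoke additivity of $\ell$ over concatenation, $L = \ell(p_{|[0,l']}) + \ell(p_{|[l',L]}) \leq l' + (L-l')$, to force equality.
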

\begin{proof}
    For $x,y \in \NN$, set $\varepsilon := d(x,y)$. The lower bound on edge lengths implies that there is a finite number of paths $x \rightsquigarrow y$ in $\NN \cap U_\varepsilon(x)$ that are parametrized by the length of each edge they can be restricted to. One of such that is shortest exists in this finite set of paths and must have length $d(x,y)$ by definition of $d$.
\end{proof}

\section{Calculus on Networks}
\label{sec:calculus}

One may desire a \emph{medium} to localize a notion of mass in networks and describe concentrations thereof. This mass may be a homogeneous object, unlike, e.g., a set of distinguished agents. This perspective is particularly meaningful in what one may call \enquote{egalitarian systems}, as those of infrastructure or platform providers. Mathematical measure theory provides a framework to study such objects and includes a notion of signed, discrete, and continuous media.

\subsection{Measures and Integration}

There is a unique measure on a network that assigns each edge and subinterval thereof its length. Measures of this type are essential for classical integration and the modeling of densities. As regular networks are metric spaces, one can consider their Borel $\sigma$-algebra $\mathcal{B}(\NN)$ as a set of measurable sets. An example of a discrete measure, often encountered in practice in the form of data, is pictured in \cref{fig:discretemeasure}.

\begin{figure}[ht]
    \centering
    \includegraphics[width=9cm]{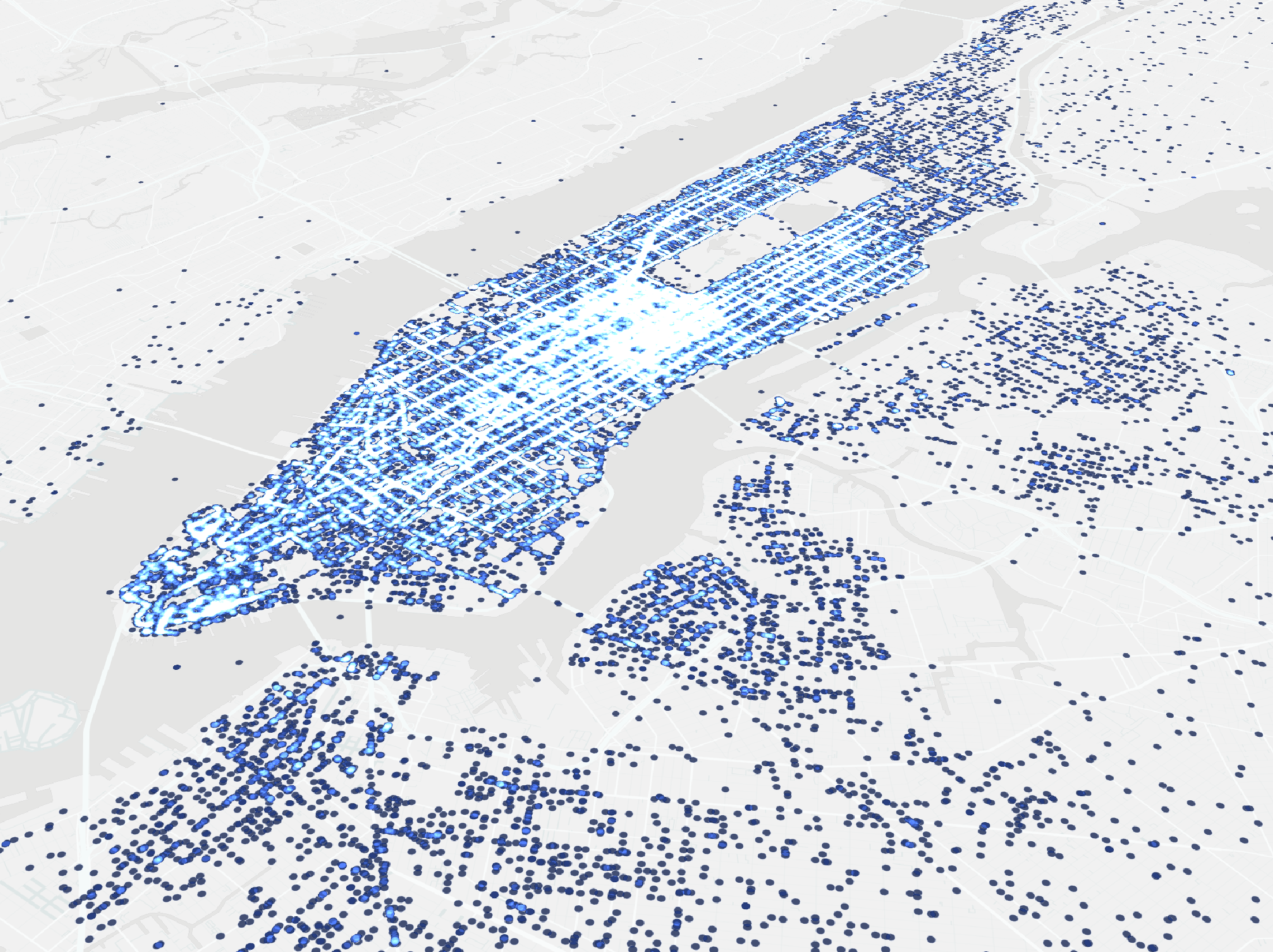}
    \caption{The discrete measure representing New York City taxi drop-offs on January 15, 2015. Data taken from NYC Taxi and Limousine Commission. Inspired by and created with kepler.gl.}
    \label{fig:discretemeasure}
\end{figure}

\begin{definition}[Lebesgue measure]
    A measure $\lambda$ is called the \textbf{Lebesgue measure} on a regular network $(\mathcal{N}, \mathcal{B}(\mathcal{N}))$ if for all $U \subseteq \NN$ that are isometrically isomorphic to a possibly degenerate interval, i.e. $U \cong [0, l]$, $\lambda_{|U}$ is the classical Lebesgue measure.
\label{def:lebesgue}
\end{definition}
\begin{remark}
    Two metric spaces $(U, d_U), (V, d_V)$ are \textbf{isometrically isomorphic} if there exists a bijective function $f: U \to V$that preserves the metric distance, i.e., for all $u_1, u_2 \in U$
    \[
        d_U(u_1, u_2) = d_V(f(u_1), f(u_2)) \,.
    \]
    One writes $U \cong V$.
\end{remark}
\begin{lemma}
    On regular networks, a unique Lebesgue measure exists and is $\sigma$-finite.
\label{lem:lebesgue}
\end{lemma}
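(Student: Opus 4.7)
The plan is to construct $\lambda$ explicitly by pushing the classical one-dimensional Lebesgue measure on each $[0, w_e]$ forward through the isometric identification of the edge $e$ with $[0, w_e]$, and summing these measures over the countable family of edges. The countability of $\mathcal{E}$ and of the vertex set $\mathcal{V}$ were both established in the proof of \cref{thm:regularity}, and the interiors of edges form pairwise disjoint Borel sets that partition $\mathcal{N} \setminus \mathcal{V}$.

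Concretely, for each $e \in \mathcal{E}$ I fix an isometry $\iota_e : [0, w_e] \to e$ (either orientation works, by reflection invariance of Lebesgue measure on $[0, w_e]$) and define a Borel measure $\lambda_e$ on $\mathcal{N}$ concentrated on $e^\circ := \iota_e((0, w_e))$ by pushing forward the standard Lebesgue measure on $[0, w_e]$ along $\iota_e$. I then set $\lambda := \sum_{e \in \mathcal{E}} \lambda_e$, which is well-defined because $\mathcal{E}$ is countable, and Borel by construction.

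To verify that $\lambda_{|U}$ agrees with the classical Lebesgue measure for every $U \cong [0, l]$, let $\varphi : [0, l] \to U$ be an isometry. The preimage $\varphi^{-1}(\mathcal{V})$ is finite: $U$ is compact, and the lower bound $\varepsilon$ on edge lengths forces distinct vertex preimages in $[0, l]$ to be separated by at least $\varepsilon$. I can therefore partition $[0, l]$ by points $0 = s_0 < s_1 < \dots < s_n = l$ such that $\varphi$ maps each open subinterval $(s_{i-1}, s_i)$ isometrically into a single edge interior. On each such piece, $\lambda$ agrees with pushforward Lebesgue by construction, the finitely many vertex points contribute measure zero, and additivity over the partition delivers the classical one-dimensional Lebesgue measure on $[0, l]$ pulled back via $\varphi$.

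Uniqueness then follows because any other Lebesgue measure $\lambda'$ must coincide with $\lambda$ on each edge (taking $U = e$ in \cref{def:lebesgue}), hence on all edge interiors, while vertices have $\lambda'$-measure zero (as singletons $\cong [0, 0]$); countable additivity together with the decomposition $\mathcal{N} = \mathcal{V} \cup \bigcup_{e \in \mathcal{E}} e^\circ$ then forces $\lambda' = \lambda$ on all of $\mathcal{B}(\mathcal{N})$. The $\sigma$-finiteness is immediate: $\mathcal{N}$ is a countable union of edges of finite $\lambda$-measure $w_e$. I anticipate the subtlest point to be the verification step for general $U \cong [0, l]$, which crucially exploits local finiteness, the lower bound on edge lengths, and compactness to reduce to a finite concatenation; everything else is routine measure-theoretic bookkeeping.
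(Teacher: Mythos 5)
Your construction is in outline the paper's: cover $\mathcal{N}$ by countably many pieces carrying a classical Lebesgue measure, sum them, and let $\sigma$-additivity over that decomposition force uniqueness. Your explicit verification for a general $U\cong[0,l]$ via the finitely many, $\varepsilon$-separated vertex preimages is in fact more detailed than the paper's appeal to \enquote{refinement of the cover}, and the $\sigma$-finiteness argument is the same.

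There is, however, one step that fails as written: a full edge $e$ is in general \emph{not} isometrically isomorphic to $[0,w_e]$ under the path metric $d$, so the isometry $\iota_e\colon[0,w_e]\to e$ you fix need not exist, and you may not \enquote{take $U=e$ in \cref{def:lebesgue}} in the uniqueness step. A self-loop carries a circle metric, and in a triangle with edge lengths $1,1,10$ (a perfectly regular network) the points at arc-length parameters $1$ and $9$ on the long edge are at $d$-distance $4$ via the opposite vertex, not $8$. Consequently \cref{def:lebesgue} says nothing directly about $\lambda'(e)$ for such an edge. The repair is exactly the granularity you already use in your verification paragraph: define $\lambda_e$ as the pushforward of Lebesgue measure under the arc-length parametrization $[0,w_e]\to e$, which is only a \emph{local} isometry, but that is all a pushforward needs; and for uniqueness cover $e^\circ$ by countably many subarcs that are short relative to their distance from the vertices of $e$ --- any competing path between two points of such a subarc must detour through a vertex and is therefore longer, so these subarcs \emph{are} isometric to intervals and \cref{def:lebesgue} applies to them. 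With that replacement, $\sigma$-additivity pins $\lambda'$ down on every $e^\circ$, vertices still get measure zero as degenerate intervals, and the rest of your argument goes through. This pitfall is precisely why the paper's own proof works with small neighborhoods isometric to possibly degenerate intervals rather than with whole edges.
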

\begin{proof}
    For all elements $x \in \NN$, one finds a neighborhood $U_x$ of $x$ that is isometrically isomorphic to a possibly degenerate interval. Due to the compactness of single edges and the countability of the edges (see \cref{thm:regularity}), one can find a countable set of elements $B \subset \NN$ such that
    \[
        \bigcup_{x \in B} U_x = \NN \,.
    \]
    Without loss of generality, assume that $(U_x)_{x \in B}$ are disjoint, as otherwise a disjoint cover can be constructed from  $(U_x)_{x \in B}$ by subtraction and intersection of sets.
    Therefore, by $\sigma$-additivity every such measure $\lambda$ must for all $A \in \mathcal{B}(\NN)$ satisfy the equation
    \[
        \lambda(A) = \lambda\bigg(\bigcup_{x \in B} A \cap U_x \bigg) = \sum_{x \in B} \lambda(A\cap U_x) = \sum_{x \in B} \lambda_{|U_x}(A\cap U_x) \,,
    \]
    where $\lambda_{|U_x}$ is the classical Lebesgue measure on the interval $U_x$. The right hand serves as the unique definition of a networks's Lebesgue measure, as for other covers of $\NN$ one can show equality by refinement of the cover $(U_x)_{x \in B}$. The Lebesgue measure is clearly $\sigma$-finite on regular networks as $U_x$ includes only a vertex and has measure zero, or is included in an edge with a finite length.
\end{proof}

The Lebesgue measure can be used to define an integral, called the Lebesgue integral (for the Lebesgue measure). The construction of this integral, a standard procedure, will not be covered. One also obtains Lebesgue densities, which will capture the notion of media defined by locally integrable functions and represent the state space of the dynamical systems studied in \cref{sec:conservation-laws}.

\subsection{Spatial Derivatives and Integration by Parts}

Some of the key uses of derivatives of mathematical objects are linear approximation and the representation of (physical) structures through differential equations. It is essential for this work to have a notion of differentiability for networks.\\

The common Euclidean theory of differentiation does not apply to networks as they are abstract metric spaces with non-oriented edges and vertices as a non-trivial boundary. Therefore it is tough to find a general notion of differentiability. The notions of differentiability differ depending on how much one likes to think of vertices as a boundary. The following definition may be the best choice since it enforces continuity constraints on values a function takes on vertices. Its offer of existence of continuous differentiability may be essential to many use-cases.

\begin{definition}[Differentiability]
    A function $f: \NN \to \R$ on a regular network is called \textbf{k-(continuously) differentiable} if $f\circ p$ is k-times (continuously) differentiable for all isometric paths $p: [0, l] \to \NN$.
\end{definition}

This notion of differentiability requires differentiability along all \enquote{structured} paths, yet does not require a notion of orientation, as only for derivatives one needs to pick a (local) orientation. Next to differentiability along edges, the definition turns out to impose an additional constraint of zero-derivatives on all vertices with more than two edges and differentiability across vertices with two edges (see \cref{fig:differentiability}).\\

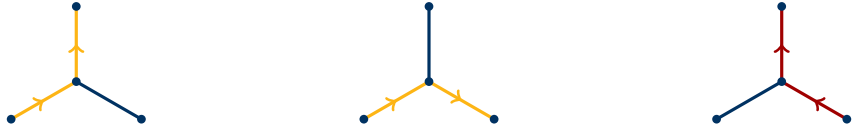
\begin{figure}[ht]
\label{fig:vertexdifferentiability}
\centering
    \makebox[.3\textwidth][c]{
    \begin{tikzpicture}
        \draw[very thick,color=myGold,->-=0.5] (0,0) -- (0,1) ;
        \draw[very thick,color=myGold,->-=0.5] (-0.866,-0.5) -- (0,0);
        \draw[very thick,color=myBlue] (0,0) -- (0.866,-0.5);
        \node at (0,0) [circle,fill=myBlue,scale=.35] {};
        \node at (0,1) [circle,fill=myBlue,scale=.35] {};
        \node at (-0.866,-0.5) [circle,fill=myBlue,scale=.35] {};
        \node at (0.866,-0.5) [circle,fill=myBlue,scale=.35] {};
    \end{tikzpicture}
    }
    \makebox[.3\textwidth][c]{
    \begin{tikzpicture}
        \draw[very thick,color=myBlue] (0,0) -- (0,1) ;
        \draw[very thick,color=myGold,->-=0.5] (-0.866,-0.5) -- (0,0);
        \draw[very thick,color=myGold,->-=0.5] (0,0) -- (0.866,-0.5);
        \node at (0,0) [circle,fill=myBlue,scale=.35] {};
        \node at (0,1) [circle,fill=myBlue,scale=.35] {};
        \node at (-0.866,-0.5) [circle,fill=myBlue,scale=.35] {};
        \node at (0.866,-0.5) [circle,fill=myBlue,scale=.35] {};
    \end{tikzpicture}
    }
    \makebox[.3\textwidth][c]{
    \begin{tikzpicture}
        \draw[very thick,color=goodred,->-=0.5] (0,0) -- (0,1) ;
        \draw[very thick,color=myBlue] (-0.866,-0.5) -- (0,0);
        \draw[very thick,color=goodred,->-=0.5] (0.866,-0.5) -- (0,0);
        \node at (0,0) [circle,fill=myBlue,scale=.35] {};
        \node at (0,1) [circle,fill=myBlue,scale=.35] {};
        \node at (-0.866,-0.5) [circle,fill=myBlue,scale=.35] {};
        \node at (0.866,-0.5) [circle,fill=myBlue,scale=.35] {};
    \end{tikzpicture}
    }
     \caption{Differentiability across vertices with more than two edges: In vertices, derivatives along isometric paths must be zero. Assuming the derivatives of a function along the yellow paths have the same non-zero sign, it can not be differentiated along the red path.}
\label{fig:differentiability}
\end{figure}
Differentiable functions are still missing a derivative. For a derivative, one has to pick an orientation for the edges. The orientation of network edges can be adapted to the orientation of a graph's edges that encodes the network, but it does not have to be. A problem that may appear is that there are vertices with two edges for which one picks different orientations. The following global object can solve this problem.

\begin{definition}[Spatial derivative]
    An operator $D: C^1(\NN) \to C(\NN)$ is called \textbf{spatial derivative} if it resembles a classical derivative, i.e., if for all open $U \subseteq \NN$ for which there is an isometric isomorphism $p: (0, l) \to U$ it holds that for all $f \in C^1(\NN)$
    \[
        (D f)_{|U} \circ p = (f_{|U}\circ p)' \,.
    \]
\label{def:spatialderivative}%
\end{definition}%
\vspace{-7.5mm}
\begin{remark}\leavevmode
    \begin{enumerate}
        \item In the setting of dynamical systems, the notation $(\cdot)_x := \tfrac{\dd}{\dd x} := D$ will also be used.
        \item Each spatial derivative provides a specific assignment of $0_e$ and $w_e$ for each edge $e \in \mathcal{E}$, as each edge inherits an \textbf{orientation} that is embedded into the definition. This fact can be characterized by monotonicity of functions: A function $f \in C^1(\NN)$ is said to be strictly monotonous on $e$ if $(Df)_{|e} > 0$. Then the definition
        \[
        0_e := \argmin f_{|e} \qquad \text{and} \qquad w_e := \argmax f_{|e}
        \]
        does not depend on the choice of $f$.
        \item Given $D$, one can define the \textbf{incoming and the outgoing edges} of vertex $v$ as
        \[
            \mathcal{E}_v^{in} := \{ e \in \mathcal{E} \mid w_e = v\} \qquad \text{and} \qquad \mathcal{E}_v^{out} := \{ e \in \mathcal{E} \mid 0_e = v\} \,.
        \]
        \item A spatial derivative can be computed by a simple reduction to the real-valued case. Indeed, for all $e \in \mathcal{E}$ there exists a unique path $p$ parametrized by length $0_e \rightsquigarrow w_e$. As $p$ is locally an isometric isomorphism, one has for all $x \in e$
        \[
            Df(x) = (f \circ p)'(p^{-1}(x)) \,.
        \]
    \end{enumerate}
\end{remark}
One obtains such a derivative by splitting the network on all vertices with more than two edges and picking an orientation for all parts. The partition's remaining elements are either isometric to intervals, or the original network is (topologically) a sphere. Piecewise derivatives are then compatible as they are zero at vertices with more than two edges, as this is required as per the differentiability of functions.\\

An analog of the classical product rule and the related integration by parts formula also holds for all spatial derivatives of a regular network. These properties are essential when dealing with certain dynamical systems that describe the transport of media in \cref{sec:conservation-laws}.

\begin{lemma}[Product rule]
    A spatial derivative $D: C^1(\NN) \to C(\NN)$ fulfills the product rule, i.e., for all $x \in e \in \mathcal{E}$ and $f,g \in C^1(\NN)$
    \[
        D (fg)(x)
        \equiv
        (fg_{|e})'(x)
        =
        f_{|e}'(x)g_{|e}(x) + f_{|e}(x)g_{|e}'(x)
        \equiv
        D f(x) g(x) + f(x) D g(x) \,.
    \]
\label{lem:productrule}
\end{lemma}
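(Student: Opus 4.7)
The plan is to reduce the claim directly to the classical Euclidean product rule on an interval, exploiting the fact (noted in the fourth item of the remark following \cref{def:spatialderivative}) that a spatial derivative is computed by pulling back along the unique length-parametrized path $p : [0, \ell(e)] \to \NN$ with $0_e \rightsquigarrow w_e$, which is a local isometric isomorphism on the interior of $e$. Since $f, g \in C^1(\NN)$, both $f \circ p$ and $g \circ p$ are classically $C^1$ on $[0, \ell(e)]$, and since pullback commutes with pointwise products, so is $(fg) \circ p = (f\circ p)(g\circ p)$. In particular $fg \in C^1(\NN)$, so $D(fg)$ is a well-defined element of $C(\NN)$.

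Fixing $x \in e$ and setting $t := p^{-1}(x)$, I would then write
\[
D(fg)(x) = \bigl((fg) \circ p\bigr)'(t) = \bigl((f\circ p)(g\circ p)\bigr)'(t),
\]
apply the classical product rule on $\R$ to the right-hand side, and re-express the two resulting factors using the identities $Df(x) = (f\circ p)'(t)$, $f(x) = (f\circ p)(t)$ and their analogues for $g$. This yields exactly $Df(x)\,g(x) + f(x)\,Dg(x)$, as claimed, and the intermediate expression with $f_{|e}'$ is just the coordinate form of the same identity read through $p$.

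The only delicate point is what happens when $x$ is a boundary vertex of $e$. If $x$ is a vertex incident to exactly one or two edges, the local isometric parametrization extends through $x$ and the argument above goes through with one-sided derivatives matching by the $C^1(\NN)$ hypothesis. If $x$ is a vertex with more than two incident edges, the differentiability requirement across such vertices (\cref{fig:differentiability}) forces $Df(x) = Dg(x) = D(fg)(x) = 0$, so both sides of the product rule vanish and the identity holds trivially. I do not expect any real obstacle here; the statement is essentially a bookkeeping lemma that certifies \cref{def:spatialderivative} behaves as a derivative, and its entire content is that pullback by a local isometry intertwines $D$ with the ordinary derivative.
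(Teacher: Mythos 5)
Your proposal is correct and takes the only natural route — reducing to the classical product rule via pullback along the local isometric parametrization of each edge, with the vertex cases handled by the vanishing-derivative constraint — which is precisely the argument the paper deems so evident that its entire proof reads ``Clear.'' Your write-up simply makes explicit the bookkeeping the authors left implicit, so there is nothing to add or correct.
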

\vspace{-10mm}
\begin{proof}
    Clear.
\end{proof}

\begin{theorem}[Integration by parts]
    Let $D: C^1(\NN) \to C(\NN)$ be a spatial derivative on a regular network $\NN$ that is compact. Then, for all $f,g \in C^1(\NN)$
    \[
        \int_\NN D f g + f D g \, \dd \lambda =%
        \sum_{e \in \mathcal{E}} (fg)(w_e) -%
        (fg)(0_e) = \sum_{v \in \mathcal{V}}%
        \bigg(\sum_{e \in \mathcal{E}_v^{in}}%
        (fg)(w_e) \bigg) -  \bigg(\sum_{e \in%
        \mathcal{E}_v^{out}} (fg)(0_e) \bigg) \,,
    \]
    where $0_e, w_e$ are the endpoints of edge $e$ the order of which depends on $D$.
\label{thm:intbyprts}
\end{theorem}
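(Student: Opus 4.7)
The plan is to reduce the network integral to a finite sum of classical integrals on intervals and then apply the usual Fundamental Theorem of Calculus edge by edge.

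First, I would establish that a compact regular network has only \emph{finitely many} edges (and hence finitely many vertices). For every $x \in \NN$ there is a neighborhood intersecting only finitely many edges: if $x$ is an interior point of an edge, shrink to a ball inside that edge; if $x$ is a vertex, the ball of radius less than $\varepsilon$ (the lower bound on edge lengths) is contained in $\bigcup_{e \in \mathcal{E}_x} e$, because reaching any other edge requires traversing a whole edge of length at least $\varepsilon$, and $|\mathcal{E}_x| < \infty$ by local finiteness. Compactness then extracts a finite subcover intersecting only finitely many edges in total, so $|\mathcal{E}| < \infty$ and $|\mathcal{V}| < \infty$. This legitimizes every finite rearrangement that follows.

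Second, I would decompose the integral edgewise. Using \cref{lem:lebesgue} and the disjoint cover obtained by splitting $\NN$ into the interiors of its edges together with $\mathcal{V}$ (which is $\lambda$-null as a finite union of single points), one gets
\[
    \int_\NN (Df\cdot g + f\cdot Dg)\,\dd\lambda
    \;=\; \sum_{e \in \mathcal{E}} \int_e \big(Df\cdot g + f\cdot Dg\big)\,\dd\lambda.
\]
By the product rule (\cref{lem:productrule}), the integrand equals $D(fg)$. For each $e \in \mathcal{E}$, fix the isometric isomorphism $p_e:[0,w_e]\to e$ with $p_e(0)=0_e,\,p_e(w_e)=w_e$ induced by the orientation $D$ provides. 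By the defining property of a spatial derivative, $(D(fg))_{|e}\circ p_e = ((fg)_{|e}\circ p_e)'$, and the transformation rule for classical Lebesgue measure gives
\[
    \int_e D(fg)\,\dd\lambda \;=\; \int_0^{w_e} \big((fg)_{|e}\circ p_e\big)'(t)\,\dd t
    \;=\; (fg)(w_e) - (fg)(0_e),
\]
by the standard Fundamental Theorem of Calculus applied to the $C^1$ function $(fg)_{|e}\circ p_e$.

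Third, I would sum and reorganize. Summing over the finite set $\mathcal{E}$ yields the middle expression $\sum_{e\in\mathcal{E}}\bigl((fg)(w_e) - (fg)(0_e)\bigr)$. Each endpoint $w_e$ of an edge is exactly a vertex $v$ for which $e \in \mathcal{E}_v^{in}$, and dually each $0_e$ corresponds to $v$ with $e \in \mathcal{E}_v^{out}$; since $\mathcal{V}$ partitions the endpoints this way, we can rewrite
\[
    \sum_{e\in\mathcal{E}} (fg)(w_e) = \sum_{v\in\mathcal{V}} \sum_{e\in\mathcal{E}_v^{in}} (fg)(w_e),
    \qquad
    \sum_{e\in\mathcal{E}} (fg)(0_e) = \sum_{v\in\mathcal{V}} \sum_{e\in\mathcal{E}_v^{out}} (fg)(0_e),
\]
which gives the right-hand vertex sum.

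The only genuine obstacle is the first step: showing compactness plus regularity forces finiteness of the edge set, which is what makes every sum in sight an unconditionally convergent (in fact finite) one and permits the rearrangement into a vertex-indexed sum. The rest is a straightforward application of the product rule and the classical one-dimensional integration by parts pulled back through each isometric edge parametrization.
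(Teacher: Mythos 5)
Your proof is correct and follows essentially the same route as the paper's: edgewise decomposition via \cref{lem:lebesgue}, the product rule \cref{lem:productrule}, the fundamental theorem of calculus on each edge, and regrouping the edge sum by vertices. The only difference is that you explicitly derive finiteness of $\mathcal{E}$ from compactness to justify the rearrangement, a point the paper leaves implicit.
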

\begin{proof}
    Let $f,g \in C^1(\NN)$ and $D: C^1(\NN) \to C(\NN)$. One has
    \begin{align*}
        \int_\NN D f g + f D g \, \dd \lambda
        &=
        \sum_{e \in \mathcal{E}} \int_e (fg)_{|e}' \, \dd \lambda_{|e}
    \tag*{\small{(\cref{lem:lebesgue,lem:productrule})}}
        \\
        &=
        \sum_{e \in \mathcal{E}} (fg)(w_e) - (fg)(0_e) \,,
    \tag*{\small{(fundamental theorem)}}
    \end{align*}
    where $0_e, w_e$ are the endpoints of $e$, the order of which depends on $D$. One obtains the second identity in the theorem's statement because each edge is incoming into exactly one vertex and outgoing from exactly one vertex.
\end{proof}

\section{Weak Solutions on Networks and Kirchhoff's First Law}
\label{sec:conservation-laws}

Weak solutions to differential equations can be a generalization of the notion of a solution from continuously differentiable functions to Lebesgue densities. In the case of networks, this generalization manifests as both a generalization to a non-Euclidean setting and a generalization to less regular initial data.\\

In \cref{fig:discontinuity}, it can be seen that even in \enquote{simple} networks, there may not exist a differentiable or even continuous solution to a conservation law. This fact and the possible generalization to less regular initial measures, such as $L^1_{loc}(\NN)$, suggest a weak solution theory for networks. This setting will turn out to yield a coupled system of weak solutions for each edge and thereby recover Kirchhoff's first law from the network's topology.\\

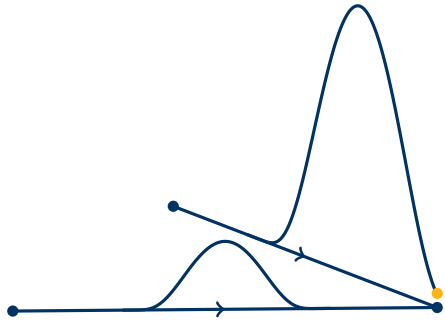
\begin{figure}[ht]
    \centering
    \begin{tikzpicture}
    \begin{axis}[smooth, hide axis,xmin=-3.,xmax=3.,ymin=-3.,ymax=3.,zmin=-.014,zmax=.04,
    height=110mm, width=150mm]
    \addplot3[
            myBlue,
            y domain=0:0,
            very thick,
            samples=60,
            domain=0:0.83,
            ]
        ({.885*(x+0.3)*3},{1.5*(1-.885*(x+0.3))},
        {2*exp(-(1/(x*(1-x))))});
    \addplot3[
            myBlue,
            y domain=0:0,
            very thick,
            samples=60,
            domain=0:1,
            ]
        ({0.5*(x+0.5)*3},{-1.5*(1-0.5*(x+0.5))},
        {.5*exp(-(1/(x*(1-x))))});

    \draw[very thick,->-=0.5,color=myBlue] (axis cs:0,1.5,0) -- (axis cs:3,0,0);
    \draw[very thick,->-=0.5,color=myBlue] (axis cs:0,-1.5,0) -- (axis cs:3,0,0);
    \node at (axis cs:3,0,0) [circle,fill=myBlue,scale=.45] {};
    \node at (axis cs:0,1.5,0) [circle,fill=myBlue,scale=.45] {};
    \node at (axis cs:0,-1.5,0) [circle,fill=myBlue,scale=.45] {};
    
    \node at (axis cs:(3,0,0.0019) [circle,fill=myGold,scale=.45] {};
    \end{axis}
    \end{tikzpicture}
    \caption{With the densities moving to the right on both edges, any choice of value on the right vertex leads to a discontinuity.}
    \label{fig:discontinuity}
\end{figure}

The following conservation law captures the dynamical systems of Lebesgue densities that will be treated in this work.

\begin{remark}
    For functions $\rho: [0, T] \times \NN \to \R$ and $A \subseteq \NN$, the notation $\rho_{|A} := \rho_{|(0, T) \times A}$ will be used. Note that in the following for functions, unless otherwise specified, $(\cdot)_{|\cdot}$ will denote the restriction, $(\cdot)_{\cdot}$ a partial derivative, and $(\cdot)_{\underline{\cdot}}$ the projection onto a factor of the codomain.\\
    For example, for a continuously differentiable function
    \[
        f: \R^2 \to \R^2, (x,y) \mapsto (a(x,y),b(x,y))
    \]
    it will be written
    \begin{gather*}
        f_{\underline{2}} \overset{!}{=} b \,, \quad 
        f_{|[0,1]} \overset{!}{=} ( x\in [0, 1] \mapsto f(x)) \,,\,\, \text{and} \quad
        f_x \overset{!}{=} \frac{\partial}{\partial x} f \,.
    \end{gather*}
    The time integrals $\int \,\cdot \,\, \dd t$ considered will be Lebesgue integrals. Further, $A^\circ$ will denote the interior of $A \subseteq \NN$.
\end{remark}

\begin{definition}[Conservation laws on networks]
    The functions
    \[
        \rho \in C\big([0, T], L^1_{loc}(\NN)\big) \quad \text{and} \quad \nu \in L^\infty_{loc}([0, T] \times \NN) \,,
    \]
    fulfill a \textbf{conservation law} with a spatial derivative $\tfrac{\dd}{\dd x}$ on the regular network $\NN$, if for all $\phi \in C_c^\infty([0, T] \times \NN)$
    \[
        \iint_{[0, T]\times\NN} \rho\,(\phi_t + \nu \phi_x) \, \lambda(\dd x) \, \dd t
        - \int_{\NN} (\rho \phi)(t, \cdot) \big\vert_{t=0}^T \, \dd \lambda = 0 \,.
    \]
\label{def:weaksolution}
\end{definition}

It is important to mention that the above solution concept enforces the flow to be zero on vertices included in only one edge. That is, there is no in- or outflow of the network. These edges may be seen as dead-ins or dead-ends. Excluding them is a purely technical feature that offers a slightly more condensed description. Even in this setting, one can always introduce \enquote{dummy edges} and not lose generalization.\\
To validate the introduced solution concept, it should be made sure it is compatible with the classical setting in the sense of \cref{thm:classical}.

\begin{theorem}[Classical solutions]
    In the setting of \cref{def:weaksolution}, if $\NN$ has only one edge $e$ and $\rho, \nu \in C^1([0, T] \times \NN)$, then $\rho$ fulfills a conservation law with $\nu$ and $\tfrac{\dd}{\dd x}$ if and only if
    \begin{align*}
        & \quad\qquad \rho_t + (\nu \rho)_x \equiv 0\\
        & \text{and if $e$ does not connect a vertex with itself}\\
        & \quad\qquad \nu(t,0_e) \rho(t, 0_e) = \nu(t, w_e) \rho(t, w_e) = 0 \quad \text{for all $t \in [0, T]$} \,.
    \end{align*}%
\label{thm:classical}%
\end{theorem}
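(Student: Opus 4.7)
The plan is to expand the weak formulation and shift derivatives off $\phi$ onto $\rho$ and $\nu\rho$, separating a bulk residual from a vertex-boundary contribution. Starting from the identity in \cref{def:weaksolution}, I would apply classical one-dimensional integration by parts in $t$ to $\iint\rho\phi_t\,\dd\lambda\,\dd t$, producing $\int(\rho\phi)(t,\cdot)\big\vert_0^T\,\dd\lambda-\iint\rho_t\phi\,\dd\lambda\,\dd t$, which exactly cancels the explicit initial/final term in the weak form. Then, pointwise in $t$, I would apply \cref{thm:intbyprts} in space to $\int\rho\nu\phi_x\,\dd\lambda$; since $\NN$ has the single edge $e$ the vertex sum reduces to $(\nu\rho\phi)(t,w_e)-(\nu\rho\phi)(t,0_e)$. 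Together, the weak formulation is equivalent to
\[
-\iint_{[0,T]\times\NN}\bigl(\rho_t+(\nu\rho)_x\bigr)\phi\,\dd\lambda\,\dd t \;+\; \int_0^T\bigl[(\nu\rho\phi)(t,w_e)-(\nu\rho\phi)(t,0_e)\bigr]\,\dd t \;=\; 0
\]
for every $\phi\in C_c^\infty([0,T]\times\NN)$.

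For the forward direction, I would first restrict to test functions $\phi$ whose spatial support lies in the interior of $e$; then the boundary sum vanishes and the fundamental lemma of the calculus of variations, combined with continuity of $\rho_t+(\nu\rho)_x$, forces $\rho_t+(\nu\rho)_x\equiv 0$ on $[0,T]\times\NN$. With the bulk integral killed, the remaining boundary integral must vanish for every admissible $\phi$. If $e$ does not connect a vertex to itself, $0_e$ and $w_e$ are distinct elements of $\NN$, so by a cutoff argument I can choose $\phi$ with $\phi(t,0_e)=\psi(t)$ for an arbitrary $\psi\in C_c^\infty((0,T))$ and $\phi\equiv 0$ in a neighbourhood of $w_e$; the fundamental lemma then gives $(\nu\rho)(t,0_e)=0$, and the symmetric choice handles $w_e$. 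For the converse, multiplying $\rho_t+(\nu\rho)_x=0$ by $\phi$, integrating, and reversing the two integration-by-parts steps recovers the weak formulation; the assumed boundary conditions---or, in the loop case, the identification $0_e\sim w_e$ in $\NN$---ensure that the telescoping vertex sum vanishes.

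The main obstacle is the cutoff construction used to peel off the two boundary conditions separately. One needs smooth $\phi$ on $\NN$ that takes a prescribed value at one vertex and vanishes near the other, and moreover satisfies the implicit constraints that \cref{def:spatialderivative} places at vertices of degree different from two. Since $e$ is a single edge with at most two endpoints and these are distinct in the non-loop case, such cutoffs can be produced by a smooth bump in the length parameterization of $e$ that is constant (hence has vanishing derivative) in neighbourhoods of each vertex; this simultaneously secures admissibility and independence of the two vertex evaluations. The loop case, where the boundary contributions cancel identically and no boundary condition is asserted, then needs no additional work.
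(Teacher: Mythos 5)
Your proposal is correct and follows essentially the same route as the paper's proof: integration by parts in time to cancel the $(\rho\phi)(t,\cdot)\big\vert_{t=0}^T$ term, the network integration-by-parts theorem (\cref{thm:intbyprts}) in space to expose the vertex contributions, the fundamental lemma of the calculus of variations first on interior-supported test functions and then via localization at each of the two distinct vertices, with the loop case handled by cancellation. The only difference is that you spell out the admissibility of the cutoff test functions near the vertices, which the paper leaves implicit; this is a useful but not structurally different addition.
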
%
\begin{proof}
    Assume $\rho$ fulfills a conservation law with $\nu$ and $\tfrac{\dd}{\dd x}$.
    Let $\NN$ be a network with only one edge $e$ and $\rho, \nu \in C^1([0, T] \times \NN)$ and $\phi \in C_c^\infty([0, T] \times \NN)$, then
    \begin{align*}
        0
        &=
        \iint_{[0, T]\times\NN} \rho\,(\phi_t + \nu \phi_x) \, \lambda(\dd x) \, \dd t
        - \int_{\NN} (\rho\phi)(t, \cdot) \big\vert_{t=0}^T \, \dd \lambda
        \\
        &=
        \int_{\NN} \bigg( \int_{[0, T]} \rho\phi_t \, \dd t \bigg) -  (\rho\phi)(t, \cdot) \big\vert_{t=0}^T \, \dd \lambda 
        +
        \int_{[0, T]} \int_{\NN} \nu \rho \phi_x \, \lambda(\dd x) \, \dd t
    \tag*{\small{(linearity and Fubini)}}
        \\
        &=
        - \int_{\NN} \int_{[0, T]} \rho_t\phi \, \dd t \, \dd \lambda 
        -
        \int_{[0, T]} \bigg(\int_{\NN} (\nu \rho)_x \phi \, \dd \lambda \bigg)  - (\nu \rho \phi)(\cdot, x) \big\vert_{x = 0_e}^{w_e}\, \dd t
    \tag*{\small{(classical integration by parts and \cref{thm:intbyprts})}}
        \\
        &=
        - \int_{[0, T]} \int_{\NN} (\rho_t + (\nu \rho)_x)\phi \, \lambda(\dd x) \, \dd t
        +
        \int_{[0, T]}  (\nu \rho \phi)(\cdot, x) \big\vert_{x = 0_e}^{w_e}\, \dd t \,.
    \tag*{\small{(Fubini and linearity)}}
    \end{align*}
    The previous equation must hold for all $\phi \in C_c^\infty([0, T] \times \NN)$; in particular if $\phi$ is zero at the domains boundary. Therefore, $\rho_t + (\nu \rho)_x$ must be zero everywhere by the fundamental lemma of the calculus of variations \cite[p.~6,~Lemma~1.1.1]{jost1998}. If $0_e \neq w_e$, then through arbitrary localization on the boundaries $[0, T] \times \{0_e\}$ and $[0, T] \times \{w_e\}$ one obtains by
    \[
        \int_{[0, T]}  (\nu \rho \phi)(\cdot, x) \big\vert_{x = 0_e}^{w_e}\, \dd t = \int_{[0, T]} (\nu \rho)(\cdot, w_e)\,\phi(\cdot, w_e)\, \dd t - \int_{[0, T]}  (\nu \rho )(\cdot, 0_e)\,\phi(\cdot, 0_e)\, \dd t = 0
    \]
    and the fundamental lemma of the calculus of variations that
    \[
        (\nu \rho)(t, 0_e) = (\nu \rho)(t, w_e) = 0 \quad \text{for all $t \in [0, T]$} \,.
    \]
    The opposite direction of the statement is trivial.
\end{proof}

One of the central ideas towards simulating and optimizing network flows is their characterization as flows on edges coupled by a conservation of their boundary flows at mutual vertices. This reduces the presented abstract setting to a more classical one that will turn out to lend itself well for computation. To this end, \cref{thm:kirchhoff} can be understood to recover Kirchhoff's first law \cite{kirchhoff1847} for conservation laws of \cref{def:weaksolution}. To get a sense of the meaning of boundary evaluations of $L^1$-functions, the following lemma is derived in preparation of \cref{thm:kirchhoff}.

\begin{lemma}[Boundary evaluations]
\label{lem:boundaryev}
    Let
    \begin{itemize}
        \item $f \in L^1\big([0, T] \times (0, w)\big)\cap C_u\big((0, w), L^1([0, T])\big)$ for some $w > 0$,
        \item $\tau \in C^\infty_c([0, T])$,
        \item $\tau_\gamma \in C^\infty_c\big([0, T] \times (0, \tfrac{1}{\gamma})\big)$ for all $\gamma \in (\frac{1}{w}, \infty)$ with
        \begin{equation}
            \int_{(0,w)} \tau_\gamma(\cdot, x) \, \dd x  = \tau(\cdot)
        \label{eq:boundaryev:identity}
        \end{equation}
        and such that there exists $M \in \R$ with
        \begin{equation}
            \int_{(0, w)} \norm{\tau_\gamma (\cdot, x)}_{L^\infty([0, T])} \, \dd x < M  \,.
        \label{eq:boundaryev:bound}
        \end{equation}
    \end{itemize}
    By completeness define $f_0 := \lim_{x \to 0} f(\cdot, x) \in L^1([0, T])$. Then,
    \[
        \int_{[0, T]} f_0(t) \tau(t) \, \dd t =  \lim_{\gamma \to \infty} \iint_{[0, T]\times (0,w)} f(t,x) \tau_\gamma(t,x) \, \dd x \, \dd t \,.
    \]
\end{lemma}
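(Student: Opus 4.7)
The plan is to decompose $\iint f \tau_\gamma$ by adding and subtracting $f_0 \tau_\gamma$, and then exploit the shrinking support of $\tau_\gamma$ together with the uniform bound \eqref{eq:boundaryev:bound} and the $L^1$-continuity of $f$ near the boundary.

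First I would use \eqref{eq:boundaryev:identity} to rewrite the target boundary integral in a form directly comparable to the right-hand side: for any $\gamma > \tfrac{1}{w}$, since $\tau_\gamma$ is supported in $[0, T] \times (0, \tfrac{1}{\gamma}) \subseteq [0, T]\times (0, w)$,
\[
    \int_{[0, T]} f_0(t) \tau(t) \, \dd t = \int_{[0, T]} f_0(t) \int_{(0, w)} \tau_\gamma(t, x) \, \dd x \, \dd t = \iint_{[0, T]\times (0, w)} f_0(t) \tau_\gamma(t, x) \, \dd x \, \dd t,
\]
where Fubini is justified because $f_0 \in L^1([0,T])$ and $\tau_\gamma$ is smooth and compactly supported. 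The task then reduces to showing that the difference
\[
    R(\gamma) := \iint_{[0, T]\times (0, w)} \bigl(f(t,x) - f_0(t)\bigr)\, \tau_\gamma(t, x) \, \dd x \, \dd t
\]
tends to zero as $\gamma \to \infty$.

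Next I would apply Fubini again (now using $f \in L^1$ and $\tau_\gamma \in L^\infty$) to integrate in $t$ first, and estimate the inner integral for each fixed $x$:
\[
    \left| \int_{[0, T]} \bigl(f(t,x) - f_0(t)\bigr)\, \tau_\gamma(t, x) \, \dd t \right| \leq \norm{f(\cdot, x) - f_0}_{L^1([0, T])}\, \norm{\tau_\gamma(\cdot, x)}_{L^\infty([0, T])}.
\]
Integrating in $x$ and noting $\tau_\gamma(\cdot, x) = 0$ for $x \geq \tfrac{1}{\gamma}$ yields
\[
    |R(\gamma)| \leq \int_{(0, 1/\gamma)} \norm{f(\cdot, x) - f_0}_{L^1([0, T])}\, \norm{\tau_\gamma(\cdot, x)}_{L^\infty([0, T])} \, \dd x.
\]

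Finally, I would invoke uniform continuity $f \in C_u\bigl((0, w), L^1([0, T])\bigr)$ together with the definition $f_0 := \lim_{x \to 0} f(\cdot, x)$ in $L^1([0, T])$: for any $\varepsilon > 0$, there is $\delta > 0$ such that $\norm{f(\cdot, x) - f_0}_{L^1([0, T])} < \tfrac{\varepsilon}{M}$ for all $x \in (0, \delta)$. Choosing $\gamma$ large enough that $\tfrac{1}{\gamma} < \delta$, and applying \eqref{eq:boundaryev:bound}, one obtains
\[
    |R(\gamma)| \leq \frac{\varepsilon}{M} \int_{(0, w)} \norm{\tau_\gamma(\cdot, x)}_{L^\infty([0, T])} \, \dd x < \varepsilon,
\]
proving the claim. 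The main subtlety is the proper setup of the two Fubini applications and the clean pairing of the $L^1$-modulus of continuity with the $L^1(\dd x; L^\infty(\dd t))$-bound on $\tau_\gamma$; once that pairing is identified, the support shrinkage of $\tau_\gamma$ and the boundary trace property of $f$ close the argument.
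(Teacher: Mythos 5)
Your proposal is correct and follows essentially the same route as the paper's proof: both use \cref{eq:boundaryev:identity} to rewrite $\int f_0\tau$ as $\iint f_0\tau_\gamma$, subtract, apply Fubini and H\"older to reduce to $\int_{(0,1/\gamma)}\norm{f(\cdot,x)-f_0}_{L^1([0,T])}\norm{\tau_\gamma(\cdot,x)}_{L^\infty([0,T])}\,\dd x$, and close with the boundary limit of $f$ in $L^1$ together with the uniform bound \cref{eq:boundaryev:bound}. The only difference is cosmetic (you budget $\varepsilon/M$ up front where the paper ends with $\varepsilon M$).
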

\begin{remark}
    $C_u$ denotes uniformly continuous functions. In the following, the limit
    \[
        f(\cdot, 0) := \lim_{x \to 0} f(\cdot, x) \in L^1([0, T])
    \]
    will be heavily used in \textbf{boundary evaluations} of functions $f$ that fulfill the regularity assumptions of \cref{lem:boundaryev}. Note that such limits may differ in network vertices based on the edge in which the limit is considered.
\end{remark}
\begin{proof}
    Let $\varepsilon > 0$ and pick $\alpha > \tfrac{1}{w}$ such that for all $x < \tfrac{1}{\alpha}$
    \[
        \norm{f_0(\cdot)-f(\cdot,x)}_{L^1([0,T])} < \varepsilon \,.
    \]
    One has for all $\gamma \in (\alpha, \infty)$
    \begin{align*}
        &\bigg|\int_{[0, T]} f_0(t) \tau(t) \, \dd t - \iint_{[0, T]\times (0,w)} f(t,x) \tau_\gamma(t,x) \, \dd x \, \dd t\bigg|
        \\
        &\quad\qquad=
        \bigg| \iint_{[0, T]\times (0,w)}  f_0(t) \tau_\gamma(\cdot, x) \, \dd x \, \dd t - \iint_{[0, T]\times (0,w)} f(t,x) \tau_\gamma(t,x) \, \dd x \, \dd t\bigg|
    \tag*{\small{(assumption, linearity)}}
        \\
        &\quad\qquad=
        \bigg| \iint_{[0, T]\times (0,w)}  (f_0(t)-f(t,x)) \tau_\gamma(\cdot, x) \, \dd x \, \dd t\bigg|
    \tag*{\small{(linearity)}}
        \\
        &\quad\qquad\leq
        \iint_{(0,w) \times [0, T]} \big| (f_0(t)-f(t,x)) \tau_\gamma(\cdot, x) \big|\, \dd t \, \dd x
    \tag*{\small{(Fubini's theorem, triangle inequality)}}
        \\
        &\quad\qquad=
        \int_{(0,1/\gamma)} \norm{f_0(\cdot)-f(\cdot,x)}_{L^1([0,T])} \norm{\tau_\gamma(\cdot, x)}_{L^\infty([0, T])}  \, \dd x
    \tag*{\small{(Hölder's inequality)}}
        \\
        &\quad\qquad\leq
        \varepsilon M \,.
    \tag*{\small{(assumptions)}}
    \end{align*}
\end{proof}

\begin{theorem}[Kirchhoff's first law]
    In the setting of \cref{def:weaksolution}, if for all edges $e \in \mathcal{E}$
    \begin{equation}
        (\rho\nu)_{|e} \in C_u\big(e^o, L^1([0,T])\big)
    \label[property]{eq:thm:kirchhoff:regularity}
    \end{equation}
    then $\rho$ fulfills a conservation law with $\nu$ and $\tfrac{\dd}{\dd x}$ if and only if
    \begin{align*}
    &
    \text{$\rho$ is a weak solution on edges, i.e.,}
    \\
    &\qquad
    \text{for all edges $e \in \mathcal{E}$ and for all $\phi \in C^\infty_c([0,T]\times e^\circ)$}
    \\
    & \quad\qquad
    \iint_{[0, T]\times e} \rho\,(\phi_t + \nu \phi_x) \, \dd \lambda_{|e} \, \dd t
        - \int_{e} (\rho \phi)(t, \cdot) \big\vert_{t=0}^T \, \dd \lambda_{|e}    \, = 0 \,,
    \\
    &
    \text{and the flow is conserved at vertices, i.e.,}
    \\
    &\qquad
    \text{for all vertices $v \in \mathcal{V}$ and for almost every $t \in [0, T]$}
    \\
    & \quad\qquad
    \bigg(\sum_{e \in \mathcal{E}^{in}_v} (\nu \rho)_{|e}(t, w_e)\bigg) = \bigg(\sum_{e \in \mathcal{E}^{out}_v} (\nu \rho)_{|e}(t, 0_e)\bigg) \,.
    \end{align*}
\label{thm:kirchhoff}
\end{theorem}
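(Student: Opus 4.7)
smooth cutoff functions concentrated near vertices, combined with \cref{lem:boundaryev} to handle boundary limits. In the direction ``$\Rightarrow$'', a single-vertex cutoff extracts the Kirchhoff identity from the global weak formulation; in the direction ``$\Leftarrow$'', a family of cutoffs peels an arbitrary test function into a bulk piece supported away from vertices (which the edgewise identity handles) and a vertex-residual whose limit reproduces the Kirchhoff sum.

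\textbf{For ``$\Rightarrow$'',} the edgewise identity is almost immediate: given $\phi\in C^\infty_c([0,T]\times e^\circ)$, I extend it by zero to an admissible test function on $[0,T]\times\NN$ (network-differentiability per \cref{def:spatialderivative} holds trivially since $\phi$ vanishes in a neighborhood of every vertex), and the edge decomposition $\lambda=\sum_{e\in\mathcal{E}}\lambda_{|e}$ from \cref{lem:lebesgue} isolates the $e$-wise identity. To obtain Kirchhoff at a fixed $v\in\mathcal{V}$, I construct a network-smooth cutoff $\chi_\gamma$ supported in $U_{1/\gamma}(v)$, equal to $1$ on $U_{1/(2\gamma)}(v)$ so that $D\chi_\gamma\equiv 0$ at $v$ (which is forced by network-differentiability when $|\mathcal{E}_v|\geq 3$), and such that on each incident edge $e\in\mathcal{E}_v$ the derivative $D\chi_\gamma$ integrates to $+1$ if $v=w_e$ and to $-1$ if $v=0_e$. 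Testing \cref{def:weaksolution} against $\phi_\gamma(t,x):=\tau(t)\chi_\gamma(x)$ for $\tau\in C^\infty_c((0,T))$ kills the $t$-boundary terms; the $\rho\phi_{\gamma,t}$--contribution is bounded by $\|\tau'\|_\infty\iint_{[0,T]\times U_{1/\gamma}(v)}|\rho|\,d\lambda\,dt$, which vanishes as $\gamma\to\infty$ since $\rho\in C([0,T],L^1_{loc}(\NN))$; and the $\rho\nu\phi_{\gamma,x}$--term decomposes as a finite sum over $\mathcal{E}_v$, to each summand of which I apply \cref{lem:boundaryev} with $\tau_\gamma:=\pm\tau(t)D\chi_\gamma(x)$ parametrized suitably near the endpoint of $e$ attached to $v$. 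Passing to the limit yields
\[
    \int_{[0,T]}\tau(t)\Bigl(\sum_{e\in\mathcal{E}_v^{in}}(\nu\rho)_{|e}(t,w_e) - \sum_{e\in\mathcal{E}_v^{out}}(\nu\rho)_{|e}(t,0_e)\Bigr)\,dt = 0,
\]
so the fundamental lemma of the calculus of variations gives Kirchhoff at $v$.

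\textbf{For ``$\Leftarrow$'',} I pick a network-smooth cutoff $\eta_\gamma$ with $\eta_\gamma\equiv 0$ on $\bigcup_{v\in\mathcal{V}}U_{1/(2\gamma)}(v)$, $\eta_\gamma\equiv 1$ outside $\bigcup_{v\in\mathcal{V}}U_{1/\gamma}(v)$, and $D\eta_\gamma\equiv 0$ at every vertex. For an arbitrary $\phi\in C^\infty_c([0,T]\times\NN)$, the product $\eta_\gamma\phi$ restricts on each edge to an element of $C^\infty_c([0,T]\times e^\circ)$; only finitely many edges meet $\mathrm{supp}\,\phi$ by compactness and the positive lower bound on edge lengths, so summing the edgewise weak identity over these edges reproduces the global weak identity for $\eta_\gamma\phi$. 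It remains to show the residual identity for $(1-\eta_\gamma)\phi$ tends to zero as $\gamma\to\infty$. The integrals of $\rho(1-\eta_\gamma)\phi_t$ and $\rho\nu(1-\eta_\gamma)\phi_x$, together with the $t$-boundary terms, are dominated by $C(\phi,\nu)|\rho|\mathbf{1}_{\mathrm{supp}\,\phi}$ and are supported in the shrinking set $\bigcup_{v\in\mathcal{V}}U_{1/\gamma}(v)\cap\mathrm{supp}\,\phi$, so dominated convergence dispatches them. The only delicate piece is
\[
    -\iint_{[0,T]\times\NN}\rho\nu\,D\eta_\gamma\,\phi\,d\lambda\,dt,
\]
which decomposes over vertices and incident edges. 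On each such edge \cref{lem:boundaryev} applies with $\tau_\gamma:=-D\eta_\gamma\cdot\phi$; the uniform continuity of $\phi$ allows me to replace $\phi(t,x)$ by the common vertex value $\phi(t,v)$ in the limit, and the prescribed $\pm 1$ integrals of $D\eta_\gamma$ on incoming/outgoing edges reassemble the expression into
\[
    \sum_{v\in\mathcal{V}}\int_{[0,T]}\phi(t,v)\Bigl(\sum_{e\in\mathcal{E}_v^{in}}(\nu\rho)_{|e}(t,w_e) - \sum_{e\in\mathcal{E}_v^{out}}(\nu\rho)_{|e}(t,0_e)\Bigr)\,dt,
\]
which vanishes by the Kirchhoff condition.

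\textbf{The main obstacle} is the simultaneous design of $\chi_\gamma$ and $\eta_\gamma$ so that they satisfy (a) network-differentiability per \cref{def:spatialderivative}, which forces their derivatives to vanish at every vertex of degree $\neq 2$, (b) controlled unit integrals of $D\chi_\gamma$ and $D\eta_\gamma$ along each incident edge to meet the hypotheses of \cref{lem:boundaryev}, and (c) compatibility with the regularity~\cref{eq:thm:kirchhoff:regularity} that legitimizes the boundary evaluations. Making each cutoff locally constant on a ball around its vertex and varying only in the interior of each incident edge satisfies all three constraints simultaneously.
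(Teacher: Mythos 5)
Your proposal is correct and follows essentially the same route as the paper's proof: zero-extension of edge test functions for the edgewise identity, shrinking vertex cutoffs whose spatial derivatives feed into \cref{lem:boundaryev} to extract the Kirchhoff sum, and a partition into a bulk piece (handled edgewise) plus a vertex residual for the converse. The only cosmetic differences are that you use product-form test functions $\tau(t)\chi_\gamma(x)$ where the paper contracts a general test function via the map $c_\gamma$, and your $\eta_\gamma$ is the complement of the paper's localized $\phi^\gamma$.
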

\begin{remark}
\label{rem:kirchhoff}
    It is essential to note that \cref{thm:kirchhoff} does not make a statement on the existence or uniqueness of solutions. The solution concept at hand relaxes the widely known setting of the existence of a solution $\rho$ for a given $\rho \mapsto \Tilde{\nu}(\rho)$ to the assumption of the existence of a solution tuple $(\nu, \rho)$. In particular, there may be a representation $\nu \equiv \Tilde{\nu}(\rho)$. Therefore, the statement of \cref{thm:kirchhoff} applies in greater generality, in particular, to hyperbolic conservation laws that admit a quasi-linear form. Generalizations to vector-valued conservation laws modeling multi-commodity flows as well as to balance laws are possible. An introduction to hyperbolic conservation laws can be found in \cite{bressan2000}.
\end{remark}
\begin{proof}
The proof is broken down into several steps.
\begin{itemize}
    \item Let $\rho$ fulfill a conservation law with $\nu$ and $\tfrac{\dd}{\dd x}$.
    Then by extension of test functions from edges to the network with $0$, it is clear that $\rho$ is a weak solution on edges.
    
    \item To show that the flow is conserved at vertices let $v \in \mathcal{V}$ and $\phi \in C_c^\infty([0, T] \times \NN)$ such that the support of $\phi$ is contained in an $\varepsilon$-ball $U_\varepsilon(v)$ around $v$ at all times, where $2\varepsilon$ is smaller than the smallest edge length, i.e.,
    \[
        \mathrm{supp} \, \phi \subseteq [0,T] \times U_\varepsilon(v)\,.
    \]
    Note that $U_\varepsilon(v)$ can be contracted with a continuous bijection $c_\gamma: U_\varepsilon(v) \to U_{\varepsilon/\gamma}(v)$ uniquely, such that,
    \[
        d(v, c_\gamma(\cdot)) = d(v, \cdot)/\gamma \quad \text{for all} \,\, \gamma \geq 1 \quad  \text{and} \quad c_1 = \mathrm{id} \,.
    \]
    For all $\gamma \geq 1$ define $\phi^\gamma: [0, T] \times \NN \to \R$ by
    \[
        \phi^\gamma(t, x) :=
        \begin{cases}
            \phi(t, c_\gamma^{-1}(x)) & \text{if} \,\, x \in U_{\varepsilon/\gamma}(v)
            \\
            0 & \text{otherwise.}
        \end{cases}
    \]
    \end{itemize}
    Generally, as $\rho$ fulfills a conservation law with $\nu$ and $\tfrac{\dd}{\dd x}$
    \begin{equation}
        0
        =
        \iint_{[0, T] \times \NN} \rho \phi^\gamma_t \dd \lambda \, \dd t
        +
        \iint_{[0, T]\times\NN} \nu \rho \phi^\gamma_x \, \dd \lambda \, \dd t
        -
        \int_{\NN} (\rho \phi)(t, \cdot) \big\vert_{t=0}^T \, \dd \lambda  \,.
    \label{eq:thm:kirchhoff:conteq}
    \end{equation}
    \begin{itemize}
    \item Therefore, by the fact that
    \[
        \mathrm{supp} \, \phi^\gamma \subseteq [0, T] \times U_{\varepsilon/\gamma}(v) \,,
    \]
     with $n := \abs{\mathcal{E}_v}$ being the number of edges connected to $v$, and the latter factor $2$ due to possible self-edges, it follows that
    \begin{equation}
        \int_{\NN} (\rho \phi)(t, \cdot) \big\vert_{t=0}^T \, \dd \lambda \leq 2\norm{\rho(0, \cdot)_{|\mathrm{supp} \, \phi}}_{L^1} \abs{\max \phi} \frac{\varepsilon}{\gamma} 2n \xrightarrow{\gamma \to \infty} 0 \,.
    \label{eq:thm:kirchhoff:shrnktm1}
    \end{equation}
    By
    \[
        \mathrm{supp} \, \phi_t^\gamma \subseteq [0, T] \times U_{\varepsilon/\gamma}(v)
    \]
    one has
    \begin{equation}
        \iint_{[0, T]\times \NN} \rho\phi^\gamma_t \, \dd \lambda \, \dd t \leq \norm{\rho_{|\mathrm{supp} \, \phi}}_{L^1} \abs{\max \phi_t} \frac{\varepsilon}{\gamma} T 2n \xrightarrow{\gamma \to \infty} 0 \,.
    \label{eq:thm:kirchhoff:shrnktm2}
    \end{equation}
    
    \item One therefore has
    \begin{align*}
        0
        &=
        \lim_{\gamma \to \infty} \iint_{[0, T]\times\NN}  \nu \rho\phi^\gamma_x \, \dd \lambda \, \dd t
    \tag*{\small{(\cref{eq:thm:kirchhoff:conteq,eq:thm:kirchhoff:shrnktm1,eq:thm:kirchhoff:shrnktm2})}}
        \\
        &= \lim_{\gamma \to \infty} \sum_{e \in \mathcal{E}_v} \iint_{[0, T]\times e^\circ}  (\nu \rho)_{|e}{\phi^\gamma_x}_{|e} \, \dd \lambda_{|e} \, \dd t
    \tag*{\small{(linearity)}}
        \\
        &=
        \int_{[0, T]} \phi(\cdot, v) \bigg(\bigg(\sum_{e \in \mathcal{E}^{in}_v} (\nu \rho)_{|e}(t, w_e)\bigg) - \bigg(\sum_{e \in \mathcal{E}^{out}_v} (\nu \rho)_{|e}(t, 0_e) \bigg)\bigg)\, \dd t
    \tag*{\small{(\cref{lem:boundaryev}, see below argument)}}
        \\
        \implies \quad &
        \bigg(\sum_{e \in \mathcal{E}^{in}_v} (\nu \rho)_{|e}(t, w_e)\bigg) = \bigg(\sum_{e \in \mathcal{E}^{out}_v} (\nu \rho)_{|e}(t, 0_e) \bigg)
        \quad \text{for almost every $t \in [0, T]$.}
    \tag*{\small{($\phi(\cdot, v)$ was arbitrary, fundamental lemma c.v. \cite[p.~6,~Lemma~1.1.1]{jost1998})}}
    \end{align*}
    To see why \cref{lem:boundaryev} is applicable for all $e \in \mathcal{E}_v$, without loss of generalization, assume $e \in \mathcal{E}_v^{out}$; if $0_e = w_e$, the two halfs of the edge need to be treated separately. In the notation of \cref{lem:boundaryev} let
    \[
        f \overset{!}{=} (\nu \rho)_{|e} \,, \quad \tau \overset{!}{=} \phi(\cdot, v) \,, \,\, \text{and} \quad \tau_\gamma \overset{!}{=} {\phi_x^\gamma}_{|e} \,.
    \]
    Then, by \cref{eq:thm:kirchhoff:regularity} and the fact that
    \[
        \mathrm{supp}\, {\phi_x^\gamma}_{|e} \subseteq [0, T] \times [0, \varepsilon/\gamma)\,,
    \]
    it remains to show that \cref{eq:boundaryev:identity,eq:boundaryev:bound} hold.
    By \cref{thm:intbyprts} and the definition of $\phi^\gamma$, \cref{eq:boundaryev:identity} holds, i.e., for all $t \in [0, T]$
    \[
        \int_e {\phi_x^\gamma}_{|e}(t,\cdot) \, \dd \lambda_{|e} = \phi(t, v) \,.
    \]
    Again by definition of $\phi^\gamma$, \cref{eq:boundaryev:bound} holds, i.e.,
    \[
        \int_{e} \big\lVert{\phi_x^\gamma}_{|e}(\cdot, x)\big\rVert_{L^\infty([0, T])} \, \lambda_{|e}(\dd x) 
        \leq
        \int_{(0, \varepsilon/\gamma)} \gamma \norm{\phi_x}_{L^\infty([0,T]\times e)} \, \dd x
        = \varepsilon  \norm{\phi_x}_{L^\infty([0,T]\times e)} \,.
    \tag*{\small{(by ${\phi_x^\gamma}_{|e} \leq \gamma \norm{\phi_x}_{L^\infty([0,T]\times e)}$)}}
    \]

    \item The converse statement of this theorem holds by the following argument. Assume that $\rho$ is a weak solution on edges with flow conservation at all vertices. It is the goal to show that
    for all $\psi \in C_c^\infty([0, T] \times \NN)$
    \begin{equation}
        \int_{[0, T]} \int_{\NN} \rho\,(\psi_t + \nu \psi_x) \, \dd \lambda \, \dd t
        - \int_{\NN}  (\rho \psi)(t, \cdot) \big\vert_{t=0}^T\, \dd \lambda = 0 \,.
    \label{eq:thm:kirchhoff:weak}
    \end{equation}
    It is equivalent to verify \cref{eq:thm:kirchhoff:weak} for test functions of type $\psi \phi^\gamma$, where $\phi^\gamma$ is a localized function, such that,
    \[
        0 \leq \phi^\gamma \leq 1 \,,
    \]
    and for a vertex $v \in \mathcal{V}$ and $\varepsilon>0$ (as above)
    \[
        \phi^\gamma\big(\cdot, U_{\varepsilon/2\gamma}(v)\big) \equiv 1 \quad \text{and} \quad \phi^\gamma\big(\cdot, U_{\varepsilon/\gamma}(v)^C\big) \equiv 0 \,.
    \]
    This is because $\phi^\gamma$ can be part of a partition of unity such that all other terms are zero by the assumption of $\rho$ being a weak solution on open edges. Therefore, the value of
    \[
        \iint_{[0, T]\times\NN} \rho\, ((\psi \phi^\gamma)_t + \nu \, (\psi\phi^\gamma)_x) \, \dd \lambda \, \dd t
        - \int_{\NN} (\rho \psi\phi^\gamma)(t, \cdot) \big\vert_{t=0}^T  \, \dd \lambda
    \]
    is constant in $\gamma>1$ and converges as $\gamma \to \infty$\,---by the exact analog derivation as above ---to the difference of incoming and outgoing flow, which is zero by assumption.
\end{itemize}
\end{proof}

\section{Future Work}
We see most promising focus of future work building on the described theory in
\begin{description}
    \item[Robust solutions to hyperbolic conservation laws] The setting of Polish spaces suggests the application of existing tools from probability to meaningfully model scenarios,
    \item[Convergence in time] Stable solutions to physically-plausible hyperbolic conservation laws on networks at long time horizons, and
    \item[Localization in networks] Uncertain measurements of characteristics of hyperbolic conservation laws on networks embedded into a higher-level physical world model.
\end{description}
\newpage

\printbibliography
\end{document}